\lstdefinestyle{style1}{
    basicstyle=\ttfamily,
    keywordstyle =    \color{black}, 
    keywordstyle =    \color{black}, 
    commentstyle =    \color{black},
    stringstyle  =    \color{black},
    columns=fullflexible,
    keepspaces=true,
    upquote=true,
}
\newtheorem{theorem}{Theorem}%  meant for continuous numbers
\newtheorem{proposition}[theorem]{Proposition}% 
\newtheorem{remark}{Remark}%
\newcommand{\R}{\mathbb{R}}
\newcommand{\CS}{\mathbf{C_0}}
\newcommand{\cs}{\mathbf{c_0}}
\newcommand{\hadamard}{\odot}
\newcommand{\kron}{\otimes}
\DeclarePairedDelimiter\abs{\lvert}{\rvert}%
\DeclarePairedDelimiter\norm{\lVert}{\rVert}
\let\oldabs\abs
\def\abs{\@ifstar{\oldabs}{\oldabs*}}
\let\oldnorm\norm
\def\norm{\@ifstar{\oldnorm}{\oldnorm*}}
\newcommand{\myvec}[1]{\textbf{\textup{vec}} ( #1)}
\newcommand{\diag}[1]{\textup{\textbf{diag}} ( #1)}
\newcommand{\ones}{\mathbf{1}}
\newcommand{\zeros}{\mathbf{0}}
\newcommand{\I}{\mathbf{I}}
\newcommand{\SN}{\bm{S}}
\newcommand{\TM}{\bm{T}}
\newcommand{\U}{\bm{U}}
\newcommand{\Gv}{\bm{G^v}}
\newcommand{\Gh}{\bm{G^h}}
\newcommand{\Xmatrix}{\bm{X}}
\newcommand{\Ymatrix}{\bm{Y}}
\newcommand{\Zmatrix}{\bm{Z}}
\newcommand{\Amatrix}{\bm{A}}
\newcommand{\Wv}{\bm{W^v}}
\newcommand{\Wh}{\bm{W^h}}
\newcommand{\Wmatrix}{\bm{W}}
\newcommand{\Cv}{\bm{C^v}}
\newcommand{\Ch}{\bm{C^h}}
\newcommand{\Vv}{\bm{V^v}}
\newcommand{\Vh}{\bm{V^h}}
\newcommand{\Vvk}[1]{\bm{V}^{\bm{v}, #1}}
\newcommand{\Vhk}[1]{\bm{V}^{\bm{h}, #1}}
\newcommand{\Wvk}[1]{\bm{W}^{\bm{v}, #1}}
\newcommand{\Whk}[1]{\bm{W}^{\bm{h}, #1}}
\renewcommand{\Bmatrix}{\bm{B}}
\newcommand{\Dv}{\bm{D^v}}
\newcommand{\Dh}{\bm{D^h}}
\newcommand{\Dmatrix}{\bm{D}}
\newcommand{\Cmatrix}{\bm{C}}
\newcommand{\Rmatrix}{\bm{R}}
\newcommand{\Pmatrix}{\bm{P}}
\newcommand{\Lambdamtrix}{\bm{\Lambda}}
\newcommand{\UTilde}{\bm{\tilde{U}}}
\newcommand{\VvkTilde}[1]{\bm{\tilde{V}}^{\bm{v}, #1}}
\newcommand{\VhkTilde}[1]{\bm{\tilde{V}}^{\bm{h}, #1}}
\pgfplotsset{compat=1.18}
\begin{document}

\title[Article Title]{Iteratively Reweighted Least Squares for Phase Unwrapping}

\author*[1]{\fnm{Benjamin} \sur{Dubois-Taine}}\email{benjamin.paul-dubois-taine@inria.fr}

\author[2,3]{\fnm{Roland} \sur{Akiki}\email{r.akiki@kayrros.com}}

\author[1,2]{\fnm{Alexandre} \sur{d'Aspremont}}\email{aspremon@ens.fr}
%\equalcont{These authors contributed equally to this work.}

%\equalcont{These authors contributed equally to this work.}

\affil*[1]{\orgdiv{DI ENS}, \orgname{Ecole Normale sup\'erieure, Universit\'e PSL, CNRS, INRIA}, \orgaddress{\city{Paris}, \postcode{75005}, \country{France}}}

\affil[2]{Kayrros SAS, Paris, 75009, France}

\affil[3]{Universit\'e Paris-Saclay, ENS Paris-Saclay, Centre Borelli, F-91190 Gif-sur-Yvette, France}
%\affil[3]{\orgdiv{Department}, \orgname{Organization}, \orgaddress{\street{Street}, \city{City}, \postcode{610101}, \state{State}, \country{Country}}}

%%==================================%%
%% sample for unstructured abstract %%
%%==================================%%

\abstract{The 2D phase unwrapping problem seeks to recover a phase image from its observation modulo $2\pi$, and is a crucial step in a variety of imaging applications. In particular, it is one of the most time-consuming steps in the interferometric synthetic aperture radar (InSAR) pipeline. In this work we tackle the $L^1$-norm phase unwrapping problem. In optimization terms, this is a simple sparsity-inducing problem, albeit in very large dimension. To solve this high-dimensional problem, we iteratively solve a series of numerically simpler weighted least squares problems, which are themselves solved using a preconditioned conjugate gradient method. Our algorithm guarantees a sublinear rate of convergence in function values, is simple to implement and can easily be ported to GPUs, where it significantly outperforms state of the art phase unwrapping methods.}

%%\pacs[JEL Classification]{D8, H51}

%%\pacs[MSC Classification]{35A01, 65L10, 65L12, 65L20, 65L70}

\maketitle

\section{Introduction}\label{sec:introduction}

We focus on the problem of estimating the phase of an input signal in scenarios where, because of the system's physical limitations, the phase is only available modulo $2\pi$. This is the case, for example, in interferometric synthetic aperture radar (InSAR) imaging~\cite{graham1974synthetic, zebker1986topographic, ghiglia1998two, rosen2000synthetic}, in
magnetic resonance imaging (MRI)~\cite{lauterbur1973image, hedley1992new} or in optical interferometry~\cite{pandit1994data}.

Formally, 2D phase unwrapping is the problem of recovering the true phase $\U$, given an observed wrapped phase $\Xmatrix$, such that \[\U = \Xmatrix + 2\pi \bm{K},\] where $\bm{K}$ is an integer-valued matrix. The problem is ill-posed, as any shift of $2\pi$ in the matrix $\U$ results in the same wrapped matrix $\Xmatrix$. Most phase unwrapping algorithms are based on the so-called Itoh condition~\cite{itoh1982analysis}, which states that the absolute difference between neighboring pixels is no more than $\pi$. When that condition is satisfied, unwrapping can be easily determined (up to a constant) by a simple integration procedure. However, in practice Itoh's condition is often violated. This can be due for example to the presence of noise in the input images, or to large discontinuities in the input signal. Phase unwrapping then becomes a much more complex task, which has been the focus of a long line of research~\cite{yu2019phase}. Most phase unwrapping algorithms fall within one of the following categories: path-following or optimization-based methods. We next give an overview of these two approaches. We refer the reader to the literature review in~\cite{yu2019phase} for a more complete description of modern phase unwrapping techniques.

\paragraph{\texorpdfstring{}{TEXT}Path following algorithms.} 

As previously mentioned, when the Itoh condition is satisfied, unwrapping can be exactly performed via a simple integration procedure, and the obtained result does not depend on the integration path. Integration can still be performed when the Itoh condition is not satisfied, but in that case the choice of the integration path significantly impacts the quality of the unwrapping. Path following methods are methods aimed at choosing good integration path. Among them are quality-guided methods, which choose the path based on a quality map of the input image, using this map to minimize phase unwrapping error in regions where the quality is high~\cite{flynn1996consistent, zhong2010improved, zhao2011quality, jian2016reliability}. Popular examples of quality maps include correlation maps, phase derivative variance maps, or priority maps. 

Another important class of path following algorithms are methods based on balancing residues in the image. Residues are the results of the loop integration of every $2 \times  2$ neighboring pixel block in the input image and are a key concept for many phase unwrapping techniques. One popular example of a residue-based method is the branch cut algorithm, which first connects nearby residues of opposite polarities, and then integrates without crossing the connections~\cite{goldstein1988satellite, ag2018interferometric}.

\paragraph{\texorpdfstring{$L^p$}{TEXT}-norm minimization methods.} $L^p$-norm minimization methods take a global approach to the phase unwrapping problem. Based on Itoh's condition, those methods seek to minimize the number of times the unwrapped phase differences fail to match the wrapped differences of the input image. In other words, the residual image formed from observed and reconstructed gradients should be sparse. This leads to minimization objectives of the form
\begin{align*}
    \min_{U} \sum_{i,j} |
\U_{i+1, j} - \U_{i,j} - \Gv_{i,j}|^p + \sum_{i,j} |\U_{i, j+1} - \U_{i,j} - \Gh_{i,j}|^p,
\end{align*}
where $\Gv_{ij}$ and $\Gh_{ij}$ denote are the vertical and horizontal phase differences, respectively, of the input image $\Xmatrix$. 

When $p=2$, the objective is quadratic, and efficient algorithms like preconditioned conjugate methods can be designed~\cite{ghiglia1994robust, ghiglia1998two}. Unfortunately, the output of the $L^2$-norm problem is much worse compared to the ones of $L^p$-norm problems with $p \leq 1$, as it tends to smooth out areas of large discontinuities~\cite{ghiglia1996minimum, ferretti2007insar}.

It is widely accepted that $p=0$ yields the best solution, since the problem is then exactly minimizing the number of pixels where the gradients do not match~\cite{yu2011residues}. Unfortunately, the objective function is not convex and the minimization problem is NP-hard~\cite{chen2000network}. Yet several methods have been proposed to approximately solve the non-convex problem~\cite{ghiglia1996minimum, ghiglia1998two, chen2000network, bioucas2007phase, yu2011residues}. In particular, \cite{ghiglia1996minimum} propose a general algorithm to solve the $L^p$-norm problem for any $p < 2$ by iteratively solving weighted linear systems. The proposed algorithm resembles an iteratively reweighted least squares approach when $p=1$, although the minimization is not exact with respect to the introduced weights, and thus no convergence proof is provided.

On the other hand, when $p=1$ the resulting problem is known to be a good approximation of the $L^0$-norm problem, and in that case the objective function is convex, so efficient algorithms can be developed~\cite{flynn1997two}. In particular, the $L^1$-norm problem can be cast as a minimum cost flow (MCF) which can be solved using graph solvers~\cite{costantini1998novel}.

Finally, statistics-based methods model the phase unwrapping problem as a maximum a posteriori (MAP) estimation problem. Different assumptions on the underlying probability distributions have been proposed, and those directly impact the hardness of the resulting optimization problem~\cite{nico2000bayesian, chen2001two, dias2002z, bioucas2007phase}. The SNAPHU method~\cite{chen2001two} is one such approach, and is one of the methods of choice for phase unwrapping, as evidenced by its use in many InSAR software packages~\cite{snap, hooper2012recent}. Unfortunately, statistics-based methods must solve complicated optimization problems in large dimension, and thus often exhibit long running times.

\paragraph{Contributions}
The main motivation behind our work is that despite this long line of research, the running time of existing methods can be prohibitive on very large images. For example, it is common in satellite imagery to treat images of a few thousand pixels by a few thousand pixels and as we will see in the experimental part of the paper, existing methods take at least a few minutes on such images. Our work significantly improves this running time, developing an algorithm with strong theoretical guarantees, which can be implemented on GPUs.

We focus on the $L^1$-norm problem. Our proposed method rewrites the $L^1$-norm as a weighted $L^2$-norm, introducing data-dependent weights~\cite{black1996unification, daubechies2004iterative, daubechies2010iteratively, bach2012optimization, mairal2014sparse, fornasier2016conjugate}. This leads to the so-called iteratively reweighted least squares (IRLS) algorithm, where the minimization is performed alternatively with respect to the weights and the objective image $\U$. The minimization with respect to the weights has a closed form solution, while the minimization with respect to $\U$ comes down to solving a linear system. This linear system is high-dimensional but the matrix-vector product associated is cheap to compute in practice, suggesting the use of iterative solvers. We thus implement the conjugate gradient (CG) method~\cite{nocedal2006conjugate} to approximately solve the system, and we provide a carefully designed preconditioner to  accelerate the CG iterations. We prove that despite the approximate solutions obtained through the CG algorithm, the iterates of the resulting IRLS algorithm satisfy a sublinear rate of convergence in function value.

We implement our fully GPU-compatible algorithm in Python. We show over extensive experiments on both simulated and real data that our method is competitive with state of the art phase unwrapping techniques in terms of image quality, and reduces the average running time by about one order of magnitude.

\paragraph{Paper organization}
The paper is organized as follows. We first introduce useful mathematical notations and properties. We detail the $L^1$-norm problem in Section~\ref{sec:model}. In Section~\ref{sec:alg} we describe the IRLS algorithm and detail the conjugate gradient method to solve the linear system that appears in IRLS iterations, as this is the main bottleneck of our method. We focus in particular on an efficient preconditioning of the linear system. We also prove sublinear convergence of the IRLS algorithm in terms of function value. In Section~\ref{sec:experiments} we compare our method against other phase unwrapping methods for different image sizes, both in terms of computing time and output quality.

\paragraph{Notation}
We write $\norm{\cdot}$ for the Euclidean norm if we are dealing with vectors, and for the Frobenius norm when dealing with matrices. We write the~$\ell_1$ norm of a vector or of a matrix as~$\norm{\cdot}_{\ell_1}$. 
%$\R$ as the set of real numbers, $\Z$ as the set of relative integers and $\N$ as the set of positive integers. For integers $n, m \in \N$, we consider $\R^{n\times m}$, the set of real-valued matrices of size $n$ by $m$, equipped with the standard inner product~$\iprod{\cdot}{\cdot} : \R^{n \times m} \times \R^{n \times m} \rightarrow \R$, as well as the Frobenius norm~$\norm{\cdot}_F$ and the~$\ell_1$ norm~$\norm{\cdot}_{\ell_1}$. 
%Formally, for $X, Y \in \R^{n \times m}$, we have
%\begin{align*}
 %   \iprod{X}{Y} &= \trace{X^\top Y} = \sum_{\substack{1 \leq i \leq n \\ 1 \leq j \leq m}} X_{i,j} Y_{i, j} \\
    %\norm{X}_F &= \sqrt{\iprod{X}{X}} = \sqrt{\sum_{\substack{1 \leq i \leq n \\ 1 \leq j \leq m}} (X_{i,j})^2}\\
    %\norm{X}_{\ell_1} &= \sum_{\substack{1 \leq i \leq n \\ 1 \leq j \leq m}} \abs{X_{i,j}}.
%\end{align*}
%and simply use $\norm{\cdot}$ for the Frobenius norm.
We write $\I_n$ as the identity matrix of dimension $n$, $\ones_n$ and $\ones_{n\times m}$ as the vector and matrix whose entries are all equal to $1$. We define $\zeros_{n}$ and $\zeros_{n \times m}$ similarly. We drop the integer index when the dimension is clear from context.

The Hadamard (or component-wise) product of $\Xmatrix$ and $\Ymatrix$, written $\Xmatrix \hadamard \Ymatrix$, is the matrix of the same dimension as $\Xmatrix$ and $\Ymatrix$ with elements given by
\begin{align*}
    (\Xmatrix \hadamard \Ymatrix)_{i, j} = \Xmatrix_{i, j} \Ymatrix_{i, j}, \quad 1\leq i \leq n, \quad 1\leq j \leq m.
\end{align*}
We also write $\frac{1}{\Xmatrix}$ as the matrix whose entries are equal to the inverse of the entries in $\Xmatrix$, provided none of those are equal to zero.

The Kronecker product of two matrices $\Xmatrix\in\R^{n_1 \times m_1}$ and $\Ymatrix \in \R^{n_2 \times m_2}$, denoted as $\Xmatrix \kron \Ymatrix$, is the matrix of size $n_1 n_2 \times m_1 m_2$ given by
\begin{align*}
    \Xmatrix \kron \Ymatrix = \begin{pmatrix}
        \Xmatrix_{1, 1} \Ymatrix &\  \cdots &\  \Xmatrix_{1, m_1} \Ymatrix\\[5pt]
        \vdots &\  \ddots &\ \vdots \\[5pt]
        \Xmatrix_{n_1, 1} \Ymatrix &\ \cdots &\ \Xmatrix_{n_1, m_1} \Ymatrix
    \end{pmatrix}
\end{align*}
For a matrix $\Xmatrix \in \R^{n\times m}$, the vectorization of $\Xmatrix$ is the vector $\myvec{\Xmatrix}\in\R^{n m}$ defined as
\[
    \myvec{\Xmatrix} = (\Xmatrix_{1,1}, \Xmatrix_{2, 1}, \dots, \Xmatrix_{n, 1}, \Xmatrix_{1, 2}, \dots, \Xmatrix_{n, 2}, \dots, \Xmatrix_{1, m}, \dots \Xmatrix_{n, m})^\top.
\]
In particular, we will make use of the following property relating matrix-matrix products to matrix-vector products,
\begin{align}
    \label{prop:vec-kron}
    \myvec{\Xmatrix\Ymatrix\Zmatrix} = (\Zmatrix^\top \kron \Xmatrix) \myvec{\Ymatrix}.
\end{align}
For a vector $x \in \R^{n}$, we define $\diag{x}$ as the diagonal matrix of dimension $n \times n$ whose diagonal is equal to the vector $x$. For a matrix $\Xmatrix \in \R^{n \times n}$, we define $\diag{\Xmatrix}$ as the vector of size $n$ composed of the diagonal entries of $\Xmatrix$. We also define weighted norms. Formally, for a matrix $\Wmatrix \in \R^{n \times m}$ with strictly positive entries, the weighted Frobenius norm and weighted $\ell_1$ norm are respectively written
\[
    \norm{\Xmatrix}_{F, \Wmatrix} := \sqrt{\sum_{\substack{1 \leq i \leq n \\ 1 \leq j \leq m}} \Wmatrix_{i, j} (\Xmatrix_{i,j})^2},~~~ 
    \norm{\Xmatrix}_{\ell_1(\Wmatrix)} := ||\Wmatrix \hadamard \Xmatrix ||_{\ell_1} 
\]

\section{Model}\label{sec:model}
In this section we define the phase unwrapping problem, together with some useful notation. We assume that the input is a wrapped phase image $\Xmatrix \in \R^{N \times M}$, namely that $\Xmatrix_{i,j} \in [0, 2\pi)$ for all $1 \leq i\leq N$ and all $1\leq j \leq M$. The wrapped phase gradients are the matrices $\Gv \in \R^{(N-1) \times M}$ and $\Gh \in \R^{N \times (M-1)}$ defined as
\begin{align*}
    \Gv_{i,j} &= \Xmatrix_{i+1, j} - \Xmatrix_{i, j} \mod 2\pi \quad 1\leq i \leq N-1, \quad 1\leq j \leq M,\\
    \Gh_{i,j} &= \Xmatrix_{i, j+1} - \Xmatrix_{i, j} \mod 2\pi \quad 1\leq i \leq N, \quad 1 \leq j \leq M-1.
\end{align*}
Following \cite{costantini1998novel}, we solve the $L^1$-norm phase unwrapping problem defined as
\begin{align}
\label{prob:main}
    \min_{\U \in \R^{N \times M}} \left\{ \begin{aligned} &\sum_{\substack{1 \leq i \leq N-1 \\ 1 \leq j \leq M}} \Cv_{i,j} \left| \U_{i+1, j} - \U_{i, j} - \Gv_{i,j}\right| \\ +& \sum_{\substack{1 \leq i \leq N \\ 1 \leq j \leq M-1}} \Ch_{i,j} \left| \U_{i, j+1} - \U_{i, j} - \Gh_{i,j}\right| \end{aligned} \right\},
\end{align}
in the variable $\U \in \R^{N \times M}$, where $\Cv \in \R^{(N-1) \times M}, \Ch \in \R^{N \times (M-1)}$ are positive, user-defined weights. When problem~\eqref{prob:main} is solved using linear programming techniques, the resulting unwrapped image will be congruent to the input image, even if congruency is not explicity enforced. This is due to the total unimodularity of the constraint matrix introduced when casting~\eqref{prob:main} as a linear program~\cite{ahuja1988network, costantini1998novel}. However, because of our algorithm and the modifications we will make to problem~\eqref{prob:main}, there is no guarantee that our solution will be congruent to the input image. %Note that many existing work focusing on $L^p$-norm minimization include congruency constraints of the form $\U_{i, j} = \Xmatrix_{i, j} + 2k_{i,j}\pi$ for some $k_{i, j} \in \Z$. Moreover, when problem~\eqref{prob:main} is solved using linear programming techniques, the resulting unwrapped image will be congruent to the input image, even if congruency is not explicity enforced. This is due to the total unimodularity of the constraint matrix introduced when casting~\eqref{prob:main} as linear program~\cite{ahuja1988network, costantini1998novel}. We will however make some modifications to the formulation~\eqref{prob:main}, so that our resulting unwrapped image might break the congruency assumption. 
We argue that this is an acceptable approach for several reasons. First, most input images $\Xmatrix$ are noisy, and even when a denoising step is applied prior to the unwrapping process, unwrapped solutions might be more accurate when no congruency is enforced. Second, as we will see in the experimental section, our proposed algorithm yields close-to congruent outputs when the input image is noiseless. Finally, in terms of numerical efficiency, removing the congruency assumption allows for much faster algorithms in practice.

Problem~\eqref{prob:main} is ambiguous, in the sense that any solution yields the same objective function value under additive shift. We therefore define the following constraint set
\begin{align*}
    \CS = \left\{ \U \in \R^{N \times M} \mid \sum_{i,j} \U_{i, j} = 0 \right\}
\end{align*}
and consider the problem
\begin{align}
\label{prob:main+constraints}
    \min_{U \in\CS } \left\{ \sum_{\substack{1 \leq i \leq N-1 \\ 1 \leq j \leq M}} \Cv_{i,j} \left| \U_{i+1, j} - \U_{i, j} - \Gv_{i,j}\right| + \sum_{\substack{1 \leq i \leq N \\ 1 \leq j \leq M-1}} \Ch_{i,j} \left| \U_{i, j+1} - \U_{i, j} - \Gh_{i,j}\right|\right\},
\end{align}
We rewrite problem~\eqref{prob:main+constraints} in matrix form, defining matrices $\SN \in \R^{(N-1) \times N}$ and $\TM \in \R^{M \times (M-1)}$ as
\begin{align*}
    \SN_{i, j} &= \begin{cases}
        1 &\text{ if } j = i+1\\
        -1 &\text{ if } j = i\\
        0 & \text{ otherwise}
    \end{cases}, \quad 1 \leq i \leq N-1, \quad 1 \leq j \leq N, \\
    \TM_{i, j}  &= \begin{cases}
        1 &\text{ if } i = j+1\\
        -1 &\text{ if } i = j\\
        0 & \text{ otherwise}
    \end{cases}, \quad 1 \leq i \leq M, \quad 1 \leq j \leq M-1.
\end{align*}
Problem \eqref{prob:main+constraints} can then be cast as
\begin{align}
    \label{prob:main-matrix-form}
    \min_{\U\in \CS} \left\{  \norm{ \Cv \hadamard (\SN \U - \Gv)}_{\ell_1} + \norm{\Ch \hadamard (\U \TM - \Gh)}_{\ell_1} \right\}.
\end{align}
% We then introduce slack variables $\Vv \in \R^{(N-1) \times M}$ and $ \Vh \in \R^{N \times (M-1)} $ and rewrite the problem as
% \begin{align}
% \begin{split}
% \label{prob:hard-constraints-slack-variables}
%     \min_{U\in \CS, \Vv, \Vh} \quad & \left\{ \norm{ \Vv}_{\ell_1( \Cv)} + \norm{\Vh}_{\ell_1( \Ch)}\right\}, \\
%     \text{subject to }\quad  & \Vv = \SN\U - \Gv,\\
%                        & \Vh = \U\TM - \Gh.
%                        \end{split}
% \end{align}
% The reason for such a reformulation will become clear in the following sections. 

\section{Algorithm}\label{sec:alg}
This section details the algorithmic framework for solving~\eqref{prob:main-matrix-form}. 
%Existing methods for solving this problem typically rely on graph linear programming methods~\cite{costantini1998novel}. However, for typical InSAR images of dimension of a few thousand pixels, these methods tend to exhibit slow running time (in the order of at least a couple of minutes on a typical CPU, see our experiments in Section~\ref{sec:experiments}). 
We first rewrite~\eqref{prob:main-matrix-form} and introduce quadratic penalties. We cast the resulting problem as a sequence of weighted least squares problems whose solutions converge to the solution of the original problem. Each least squares problem is solved using the conjugate gradient method with an appropriate preconditioner. We detail each of these steps below, following the algorithmic framework in~\cite{fornasier2016conjugate}. Note that reweighting techniques similar in spirit (but not exactly the same) to the ones presented in this work have been proposed for the phase unwrapping problem~\cite{ghiglia1996minimum}, although no connection to the iteratively reweighted least squares approach was made and no convergence theory was provided.

\subsection{Iteratively reweighted least squares}\label{sec:IRLS}
We let $\Vv \in \R^{(N-1) \times M}$ and $ \Vh \in \R^{N \times (M-1)} $ be slack variables and rewrite problem~\eqref{prob:main-matrix-form} as
\begin{equation}
\begin{split}
    \label{prob:soft-constraints}
    \min_{\substack{\U\in \CS,\\ \Vv, \Vh}} F(\U, \Vv, \Vh) := \left\{ \begin{aligned}  & \norm{ \Vv}_{\ell_1( \Cv)} + \norm{\Vh}_{\ell_1(\Ch)} \\ &+ \frac{1}{2\tau} \norm{\SN \U - \Gv - \Vv}^2 + \frac{1}{2\tau} \norm{\U\TM - \Gh - \Vh}^2\end{aligned} \right\},
\end{split}
\end{equation}
for some penalization term $\tau > 0$. Problem~\eqref{prob:soft-constraints} is a quadratic penalty method for~\eqref{prob:main-matrix-form}, and iteratively solving~\eqref{prob:soft-constraints} for decreasing values of $\tau$ converging to 0 yields a sequence of solutions whose limit points are exact minimizers of~\eqref{prob:main-matrix-form}~\cite{bertsekas2014constrained}. We will however keep the value of $\tau$ constant to reduce the computing time. This is also motivated by the fact that images encountered in phase unwrapping problems are often noisy, and quadratic penalties might be beneficial in that regard. Moreover, substituting hard constraints for quadratic penalties has numerous numerical advantages, as pointed out in~\cite{fornasier2016conjugate}, and we shall explore them in Section~\ref{sec:CGM}.  Before detailing the iteratively reweighted least squares (IRLS) method, we prove existence and uniqueness of the solution of~\eqref{prob:soft-constraints}.

\begin{theorem}
\label{thm:solution-existence}
    Problem~\eqref{prob:soft-constraints} has a unique solution.
\end{theorem}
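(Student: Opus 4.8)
I would prove existence by the direct method (coercivity + Weierstrass) and then establish uniqueness from convexity, the latter requiring some care because the $\ell_1$ penalties on $\Vv,\Vh$ are not strictly convex.

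\textbf{Existence.} First I would note that $F$ is continuous and convex, being a sum of two weighted $\ell_1$-norms and two convex quadratics, and that the feasible set $\CS\times\R^{(N-1)\times M}\times\R^{N\times(M-1)}$ is nonempty, closed and convex. By the Weierstrass theorem it then suffices to show that $F$ is coercive on this set. The structural fact that makes this work is that the linear map $\U\mapsto(\SN\U,\U\TM)$ is injective on $\CS$: $\SN\U=\zeros$ forces every two consecutive rows of $\U$ to coincide and $\U\TM=\zeros$ forces every two consecutive columns to coincide, so $\U$ is a constant matrix, and the constraint $\sum_{i,j}\U_{i,j}=0$ then gives $\U=\zeros$. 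By finite-dimensionality, $\norm{\SN\U}^2+\norm{\U\TM}^2\ge\kappa\norm{\U}^2$ for some $\kappa>0$ and all $\U\in\CS$. Combining this with $\norm{\Vv}_{\ell_1(\Cv)}\ge(\min_{i,j}\Cv_{i,j})\norm{\Vv}$ and the analogous bound for $\Vh$, one checks that if all four terms of $F$ stay bounded then so do $\norm{\Vv}$ and $\norm{\Vh}$, hence $\norm{\SN\U}$ and $\norm{\U\TM}$, hence $\norm{\U}$; thus $F$ is coercive on the feasible set and a minimizer exists.

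\textbf{Uniqueness.} Suppose $(\U_1,\Vv_1,\Vh_1)$ and $(\U_2,\Vv_2,\Vh_2)$ are two minimizers and set $\delta\U=\U_2-\U_1$, $\delta\Vv=\Vv_2-\Vv_1$, $\delta\Vh=\Vh_2-\Vh_1$. Their midpoint is again a minimizer, so every term of $F$ must be affine along the connecting segment. Each quadratic term is strictly convex modulo the kernel of $(\U,\Vv)\mapsto\SN\U-\Vv$, respectively $(\U,\Vh)\mapsto\U\TM-\Vh$, so affineness forces $\SN\delta\U=\delta\Vv$ and $\delta\U\TM=\delta\Vh$. Next I would use the first-order optimality conditions in the slack variables, $\SN\U_k-\Gv-\Vv_k=\tau\,\Cv\hadamard s_k$ with $s_k\in\partial\norm{\Vv_k}_{\ell_1}$ and the analogue for $\Vh$: since $\SN\delta\U=\delta\Vv$ makes the left-hand sides independent of $k$ and $\Cv,\Ch$ have positive entries, this yields $s_1=s_2$, constraining the signs of the slack coordinates of the two minimizers to agree. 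The final step would be to combine these sign constraints, the identities $\SN\delta\U=\delta\Vv$ and $\delta\U\TM=\delta\Vh$, and the injectivity of $\U\mapsto(\SN\U,\U\TM)$ on $\CS$ established above, and conclude $\delta\U=\zeros$, whence $\delta\Vv=\delta\Vh=\zeros$.

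\textbf{Main obstacle.} That last step is the crux. The reason there is no one-line argument is that the weighted $\ell_1$ terms are not strictly convex — equivalently, after minimizing out the slacks the objective becomes a sum of Huber functions of the gradient residuals, which are affine for large residuals — so $F$ genuinely has flat directions, and one must rule out a nonzero feasible $\delta\U$ along which both quadratic residuals are frozen while the slacks merely slide along a common sign pattern. I expect this to require a careful coordinatewise analysis exploiting the bidiagonal structure of $\SN$ and $\TM$ together with the full first-order optimality conditions (rather than just the fact that the two points are minimizers), and this is where the bulk of the work will lie.
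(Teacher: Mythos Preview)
Your existence argument via coercivity is fine. For uniqueness the paper takes a shortcut you did not consider: after vectorising, it claims that the nullspace of $\Bmatrix:(u,v^v,v^h)\mapsto\bigl((\I\otimes\SN)u-v^v,\,(\TM^\top\otimes\I)u-v^h\bigr)$ is one-dimensional, spanned by $(\ones_{NM},\zeros,\zeros)$; the feasible set $\cs\times\R^{(N-1)M}\times\R^{N(M-1)}$ is then its orthogonal complement, the quadratic part alone is strongly convex there, and uniqueness follows at once with no analysis of the $\ell_1$ terms. Your instinct that the quadratic cannot carry uniqueness by itself is, however, correct and the paper's claim is in error: for \emph{any} $u$ the triple $\bigl(u,(\I\otimes\SN)u,(\TM^\top\otimes\I)u\bigr)$ lies in $\ker\Bmatrix$, so that kernel is $NM$-dimensional, and its intersection with the feasible set is exactly the family of directions $(\delta\U,\SN\delta\U,\delta\U\,\TM)$, $\delta\U\in\CS$, that you isolated---along each of which the quadratic part of $F$ is constant. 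The paper's strong-convexity step therefore fails at precisely the spot where you flagged the difficulty.

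Worse, the obstacle you identify is not merely technical: as stated, the theorem is false. Take $N=M=2$, unit weights $\Cv=\Ch=\ones$, and data $\Gv,\Gh$ whose residue $\rho:=\Gv_{12}-\Gv_{11}+\Gh_{11}-\Gh_{21}$ satisfies $|\rho|>4\tau$. Minimising out $\Vv,\Vh$ turns $F$ into a sum of four Huber functions (threshold $\tau$) of the four gradient residuals $r_i$, which obey the single linear identity $r_1-r_2-r_3+r_4=\rho$; whenever all four residuals sit in the affine part of their Huber functions with the appropriate signs, that sum equals the constant $|\rho|-2\tau$, and the set of $\U\in\CS$ for which this happens has nonempty interior. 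Hence $F$ has a continuum of minimisers $(\U,\Vv,\Vh)$. In the phase-unwrapping regime residues are $\pm 2\pi$ while $\tau$ is taken small (the paper uses $\tau=10^{-2}$), so this is the typical situation, and no coordinatewise analysis will close your gap.
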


\begin{proof}
    Define the vectorized function $f$ as 
\begin{align*}
    f(u, v^v, v^h) := &\norm{v^v}_{\ell_1(c^v)} + \norm{v^h}_{\ell_1(c^h)} 
     + \frac{1}{2\tau} \norm{\bm{B} \begin{pmatrix}
        u \\ v^v \\ v^h
    \end{pmatrix}  - \begin{pmatrix}
        g^v \\ g^h
    \end{pmatrix}}^2.
\end{align*}
with $g^v = \myvec{\Gv}$, $g^h = \myvec{\Gh}$, and
\begin{align*}
    \bm{B} = \begin{pmatrix}
        \I_M \kron \SN &\ -\I_{(N-1)M} &\ \zeros \\
        \TM^\top \kron \I_N &\ \zeros &\ -\I_{N(M-1)}
    \end{pmatrix} 
\end{align*}
It follows that $F(\U, \Vv, \Vh) = f(\myvec{\U}, \myvec{\Vv}, \myvec{\Vh})$. Moreover define
\begin{align*}
    \cs = \{ x \in \R^{NM} \mid \sum_{i} x_i = 0\}.
\end{align*}
It is clear that $\U \in \CS \iff \myvec{\U} \in \cs$. Therefore problem~\eqref{prob:soft-constraints} is equivalent to
\begin{align*}
    \min_{u, v^v, v^h} f(u, v^v, v^h) + \delta_{\cs}(u)
\end{align*}
where $\delta_{\cs}$ is the indicator function of the closed convex set $\cs$. Now, the nullspace of the matrix $\Bmatrix$ is one dimensional and spanned by $\begin{pmatrix}
    \ones_{NM}^\top & \zeros_{(N-1)M}^\top & \zeros_{N(M-1)}^\top
\end{pmatrix}^\top$. %This implies that if $u \in c_0$, then $\begin{pmatrix}
       % u \\ v^h \\ v^v
    %\end{pmatrix}$ is actually orthogonal to the kernel of $B$. In other words, the feasible set is precisely the orthogonal complement of the kernel of the matrix $B$.
    This implies that the feasible set is precisely the orthogonal complement of the nullspace of the matrix $\Bmatrix$. This in turn implies in particular that the quadratic function
\begin{align*}
    \frac{1}{2\tau} \norm{\Bmatrix\begin{pmatrix}
        u \\ v^v \\ v^h
    \end{pmatrix}  - \begin{pmatrix}
        g^v \\ g^h
    \end{pmatrix}}^2
\end{align*}
is strongly convex on the feasible set. Since $\norm{v^v}_{\ell_1(c^v)} + \norm{v^h}_{\ell_1(c^h)}$ is convex, the function $f$ is strongly convex on the feasible set, hence the minimizer is unique.
\end{proof}

Problem~\eqref{prob:soft-constraints} is a convex optimization problem, composed of a smooth part and non-smooth part. It could be tackled using well-established methods based on proximal steps such as FISTA~\cite{beck2009fast} or IHT~\cite{blumensath2009iterative}, but here we follow the IRLS approach \cite{lawson1961contribution, holland1977robust, osborne1985finite, daubechies2004iterative, daubechies2010iteratively, fornasier2016conjugate}. This is motivated by empirical evidence provided in~\cite{fornasier2016conjugate} showing the superiority of the IRLS approach in high dimension. We start by rewriting $F$ as
\begin{align}
    \begin{split}
    F(\U, \Vv, \Vh) = \quad &  \sum_{\substack{1 \leq i \leq N-1 \\ 1 \leq j \leq M}} \sqrt{ (\Cv_{i, j})^2 (\Vv_{i,j})^2}   +  \sum_{\substack{1 \leq i \leq N \\ 1 \leq j \leq M-1}} \sqrt{(\Ch_{i, j})^2 (\Vh_{i,j})^2}  \\
    + & \frac{1}{2\tau} \norm{\SN\U - \Gv - \Vv}^2 + \frac{1}{2\tau} \norm{\U\TM - \Gh - \Vh}^2.
    \end{split}
\end{align}
The issue in optimizing the above is the non-smooth square root part of the objective. For $x \in \mathbb{R}$, we have
\begin{align}
\label{eq:eta-trick}
    \abs{x} = \sqrt{x^2} = \frac{1}{2 } \inf_{w > 0} \left\{  \frac{x^2}{w} + w \right\}.
\end{align}
This simple equation is a classical way of rewriting sparsity-inducing norms in terms of quadratic objectives~\cite{black1996unification, daubechies2004iterative, daubechies2010iteratively, bach2012optimization, mairal2014sparse, fornasier2016conjugate}. Whenever $x \not = 0$, the above infimum is attained for $w = \abs{x}$. Unfortunately the infimum is not attained when $x = 0$. In our case this is an issue since our ultimate goal is to perform minimization with respect to this variable, so we introduce a fixed parameter $\delta > 0$ and observe that 
\begin{align}
    \sqrt{x^2 + \delta^2} = \frac{1}{2 } \inf_{w \geq \frac{\delta}{2}} \left\{ (x^2 + \delta^2)\frac{1}{w} + w \right\}.
\end{align}
Moreover, this infimum is always attained for $w =  \sqrt{x^2 + \delta^2}$. We therefore introduce the function $F_\delta$ defined as
\begin{align}
\label{eq:F_delta}
    \begin{split}
    F_\delta(\U, \Vv, \Vh) := \quad &  \sum_{\substack{1 \leq i \leq N-1 \\ 1 \leq j \leq M}} \sqrt{ (\Cv_{i, j})^2 (\Vv_{i,j})^2 + \delta^2}   +  \sum_{\substack{1 \leq i \leq N \\ 1 \leq j \leq M-1}} \sqrt{(\Ch_{i, j})^2 (\Vh_{i,j})^2 + \delta^2}  \\
    + & \frac{1}{2\tau} \norm{\SN\U - \Gv - \Vv}^2 + \frac{1}{2\tau} \norm{\U\TM - \Gh - \Vh}^2.
    \end{split}
\end{align}
We then show the following approximation result, as a function of $\delta$.

\begin{theorem}
\label{thm:F-vs-Fdelta}
    For any $\U \in \R^{N \times M}, \Vv\in \R^{(N-1) \times M}$ and $\Vh \in \R^{N \times (M-1)}$, 
    \begin{align*}
        F(\U, \Vv, \Vh) \leq F_{\delta}(\U, \Vv, \Vh) \leq F(\U, \Vv, \Vh) + \delta \left( (N-1)M + N (M-1)\right).
    \end{align*}
\end{theorem}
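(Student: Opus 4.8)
The plan is to reduce the claimed two-sided bound to a single elementary scalar inequality applied term by term. Comparing the definitions~\eqref{eq:F_delta} of $F_\delta$ and of $F$, the two quadratic penalty terms $\frac{1}{2\tau}\norm{\SN\U - \Gv - \Vv}^2$ and $\frac{1}{2\tau}\norm{\U\TM - \Gh - \Vh}^2$ appear identically in both functions, so they cancel in the difference $F_\delta - F$. What remains is a comparison of the sums of square-root terms: each term $\sqrt{(\Cv_{i,j})^2 (\Vv_{i,j})^2}$ in $F$ is replaced by $\sqrt{(\Cv_{i,j})^2 (\Vv_{i,j})^2 + \delta^2}$ in $F_\delta$, and similarly for the horizontal terms indexed by $\Ch$ and $\Vh$.

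The key lemma is that for any real $a$ and any $\delta > 0$,
\[
    \sqrt{a^2} \;\leq\; \sqrt{a^2 + \delta^2} \;\leq\; \sqrt{a^2} + \delta .
\]
The left inequality holds because $\delta^2 \geq 0$ and the square root is nondecreasing. For the right inequality I would square the right-hand side, $(\sqrt{a^2} + \delta)^2 = a^2 + 2\delta\abs{a} + \delta^2 \geq a^2 + \delta^2$, and take square roots of both (nonnegative) sides; equivalently this is just subadditivity of $\sqrt{\cdot}$ on $[0,\infty)$.

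Applying the left inequality with $a = \Cv_{i,j}\Vv_{i,j}$ for each of the $(N-1)M$ vertical indices and with $a = \Ch_{i,j}\Vh_{i,j}$ for each of the $N(M-1)$ horizontal indices, then adding the (common) penalty terms to both sides, yields $F(\U,\Vv,\Vh) \leq F_\delta(\U,\Vv,\Vh)$. Applying the right inequality in the same way, each of the $(N-1)M + N(M-1)$ square-root terms increases by at most $\delta$ when $\delta^2$ is inserted under the root, so $F_\delta(\U,\Vv,\Vh) \leq F(\U,\Vv,\Vh) + \delta\big( (N-1)M + N(M-1)\big)$. There is no genuine obstacle here; the only point requiring care is the bookkeeping of the number of square-root terms, namely $(N-1)M$ coming from the vertical differences and $N(M-1)$ from the horizontal ones, which is exactly the constant appearing in the statement.
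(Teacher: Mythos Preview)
Your proof is correct and follows essentially the same approach as the paper: both reduce to the scalar inequality $\sqrt{a^2+\delta^2}\leq |a|+\delta$ (the paper phrases it as $\sqrt{a^2+b^2}\leq a+b$ for $a,b\geq 0$) applied term by term, after noting that the quadratic penalty terms are common to $F$ and $F_\delta$.
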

\begin{proof}
    It is clear that $F \leq F_\delta$ since $\delta > 0$. The second inequality comes from the fact that $\sqrt{a^2 + b^2} \leq a + b$ for $a, b \geq 0$ hence
    \begin{align*}
        F_\delta(\U, \Vv, \Vh) &\leq F(\U, \Vv, \Vh) + \sum_{\substack{1 \leq i \leq N-1 \\ 1 \leq j \leq M}} \delta  +  \sum_{\substack{1 \leq i \leq N \\ 1 \leq j \leq M-1}} \delta \\ &= F(\U, \Vv, \Vh) + \delta (N-1)M + \delta N(M-1)
    \end{align*}
    which is the desired result.
\end{proof}

Using the above remark, we get that minimizing $F_\delta$ is equivalent to solving 
\begin{equation}
\label{prob:alt1}
\begin{split}
    \inf_{\substack{\U\in \CS,\\ \Vv, \Vh,\\ \Wv \geq \delta/2, \Wh \geq \delta/2}}\left\{ \begin{aligned} 
    &\frac{1}{2} \sum_{\substack{1 \leq i \leq N-1 \\ 1 \leq j \leq M}}  ((\Cv_{i, j})^2 (\Vv_{i,j})^2 + \delta^2) \frac{1}{\Wv_{i,j}} + \Wv_{i,j} \\ + & \frac{1}{2} \sum_{\substack{1 \leq i \leq N \\ 1 \leq j \leq M-1}} ((\Ch_{i, j})^2 (\Vh_{i,j})^2 + \delta^2) \frac{1}{\Wh_{i,j}} + \Wh_{i,j} \\
    + & \frac{1}{2\tau} \norm{\SN\U - \Gv - \Vv}^2 + \frac{1}{2\tau} \norm{\U\TM - \Gh - \Vh}^2
    \end{aligned}\right\}
    \end{split},
\end{equation}
where $\Wv \in \R^{(N-1) \times M}$ and $\Wh \in \R^{M \times (N-1)}$.
%Following the approach from~\cite{fornasier2016conjugate}, we introduce the function
%\begin{equation}
 %   \begin{split}
  %      J_\delta(U, \Vv, \Vh, \Wv, \Wh) := \left\{ \begin{aligned} 
   % &\frac{1}{2} \sum_{\substack{1 \leq i \leq N-1 \\ 1 \leq j \leq M}}  \left((\Cv_{i, j})^2 (\Vv_{i,j})^2 + \delta^2\right) W_{i,j}^h + \frac{1}{\Wv_{i,j}} \\ + & \frac{1}{2} \sum_{\substack{1 \leq i \leq N \\ 1 \leq j \leq M-1}} \left((\Ch_{i, j})^2 (\Vh_{i,j})^2 + \delta^2 \right) W_{i,j}^v + \frac{1}{\Wh_{i,j}} \\
    %+ & \frac{1}{2\tau} \norm{\SN\U - \Gv - \Vv}^2 + \frac{1}{2\tau} \norm{\U\TM - \Gh - \Vh}^2
   % \end{aligned}\right\}
   % \end{split}.
%\end{equation}
%so that
%\begin{align}
%    \inf_{\Wv > \delta /2, \Wh > \delta/2} J_{\delta}(U, \Vv, \Vh, \Wv, \Wh) = F_\delta(U, \Vv, \Vh)
%\end{align}
%\textbf{Option 1 (if we follow the analysis of~\cite{fornasier2016conjugate}, i.e. $\delta$ is not fixed)}: We will show that the solution to problem~\eqref{prob:alt1}, and thus to problem~\eqref{prob:soft-constraints}, can be attained by alternatively minimizing $J_\tau$ with respect to $(U, \Vv, \Vh)$, $(\Wv, \Wh)$ and $\delta$.\\
We solve~\eqref{prob:alt1} by alternatively minimizing with respect to $(\U, \Vv, \Vh)$ and $(\Wv, \Wh)$. The objective is a simple convex function with respect to $\Wv$ and $\Wh$ and setting the gradients to $0$ yields the following closed-form updates for~$\Wv$ and~$\Wh$:
\begin{equation}
\begin{split}
\label{eq:weight-matrices-closed-form}
    \Wv_{i,j} &= \sqrt{(\Cv_{i, j})^2 (\Vv_{i,j})^2 + \delta^2}, \quad 1\leq i \leq N-1, \quad 1 \leq j \leq M, \\ 
    \Wh_{i,j} &= \sqrt{(\Ch_{i, j})^2 (\Vh_{i,j})^2 + \delta^2}, \quad 1\leq i \leq N, \quad 1 \leq j \leq M-1.
    \end{split}
\end{equation}
On the other hand, objective~\eqref{prob:alt1} is a quadratic function of $(\U, \Vv, \Vh)$, and the minimization can be performed using the following simple proposition.

\begin{proposition}
\label{prop:equivalence-min-linear-system}
    Minimizing~\eqref{prob:alt1} with respect to $(\U, \Vv, \Vh)\in \CS \times \R^{(N-1)\times M} \times \R^{N \times (M-1)}$ is equivalent to solving the system
    \begin{equation}
\label{eq:linear-system}
    \begin{pmatrix}
    \frac{1}{\tau} \left( \SN^\top \SN \U  + \U \TM\TM^\top - \SN^\top \Vv  - \Vh \TM^\top \right) \\[5pt]
        \Cv \hadamard \Cv \hadamard \frac{1}{\Wv} \hadamard \Vv + \frac{1}{\tau} (\Vv - \SN \U) \\[5pt]
        \Ch \hadamard \Ch \hadamard \frac{1}{\Wh} \hadamard \Vh + \frac{1}{\tau} (\Vh - \U \TM)
    \end{pmatrix} = \begin{pmatrix}
    \frac{1}{\tau} ( \SN^\top \Gv + \Gh \TM^\top)\\[5pt]
        -\frac{1}{\tau} \Gv\\[5pt]
        -\frac{1}{\tau} \Gh
    \end{pmatrix},
\end{equation}
for $(\U, \Vv, \Vh)\in \R^{N\times M} \times\R^{(N-1)\times M} \times \R^{N \times (M-1)}$, and then setting $\U = \U - \frac{\sum_{i, j} \U_{i,j}}{NM}$.
\end{proposition}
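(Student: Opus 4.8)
The plan is to identify the linear system~\eqref{eq:linear-system} as the stationarity condition $\nabla h=\zeros$ of the quadratic that~\eqref{prob:alt1} reduces to once the weights $(\Wv,\Wh)$ are frozen, and then to exploit the invariance of that quadratic under the shifts $\U\mapsto\U+c\ones_{N\times M}$ to pass between the constrained problem on $\CS$ and the free problem on $\R^{N\times M}$. First I would discard from~\eqref{prob:alt1} the terms $\delta^2/\Wv_{i,j}+\Wv_{i,j}$ and $\delta^2/\Wh_{i,j}+\Wh_{i,j}$, which are constants, so that minimizing~\eqref{prob:alt1} in $(\U,\Vv,\Vh)$ is the same as minimizing the convex quadratic
\begin{align*}
    h(\U,\Vv,\Vh) :={} & \tfrac12\sum_{\substack{1\leq i\leq N-1\\ 1\leq j\leq M}} \frac{(\Cv_{i,j})^2(\Vv_{i,j})^2}{\Wv_{i,j}} + \tfrac12\sum_{\substack{1\leq i\leq N\\ 1\leq j\leq M-1}} \frac{(\Ch_{i,j})^2(\Vh_{i,j})^2}{\Wh_{i,j}} \\
    {}& + \tfrac1{2\tau}\norm{\SN\U - \Gv - \Vv}^2 + \tfrac1{2\tau}\norm{\U\TM - \Gh - \Vh}^2 .
\end{align*}
Using $\nabla_{\Xmatrix}\tfrac12\norm{\Amatrix\Xmatrix-\Bmatrix}^2=\Amatrix^\top(\Amatrix\Xmatrix-\Bmatrix)$, $\nabla_{\Xmatrix}\tfrac12\norm{\Xmatrix\Amatrix-\Bmatrix}^2=(\Xmatrix\Amatrix-\Bmatrix)\Amatrix^\top$ and $\nabla_{\Xmatrix}\tfrac12\sum_{i,j}\Wmatrix_{i,j}\Xmatrix_{i,j}^2=\Wmatrix\hadamard\Xmatrix$, a direct computation of the three blocks $\nabla_\U h$, $\nabla_{\Vv}h$, $\nabla_{\Vh}h$ shows that $\nabla h=\zeros$ is exactly system~\eqref{eq:linear-system}. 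Since $h$ is convex, $(\U,\Vv,\Vh)$ solves~\eqref{eq:linear-system} if and only if it minimizes $h$ over all of $\R^{N\times M}\times\R^{(N-1)\times M}\times\R^{N\times(M-1)}$.

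Next I would record the shift invariance: because $\SN\ones_N=\zeros_{N-1}$ and $\ones_{N\times M}\TM=\zeros_{N\times(M-1)}$, we have $h(\U+c\ones_{N\times M},\Vv,\Vh)=h(\U,\Vv,\Vh)$ for every $c\in\R$, and differentiating at $c=0$ shows that $\nabla_\U h$ always has zero entrywise sum. To see that~\eqref{eq:linear-system} is consistent, observe that the constrained problem has a solution $(\U^\star,\Vv^\star,\Vh^\star)$ with $\U^\star\in\CS$: exactly as in the proof of Theorem~\ref{thm:solution-existence}, the quadratic penalty part of $h$ is strongly convex on the codimension-one subspace $\CS\times\R^{(N-1)\times M}\times\R^{N\times(M-1)}$, and the remaining terms are convex, so $h$ is strongly convex there and attains a unique constrained minimizer. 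Optimality at $(\U^\star,\Vv^\star,\Vh^\star)$ gives $\nabla_{\Vv}h=\nabla_{\Vh}h=\zeros$ (free blocks) and $\nabla_\U h\in\CS^\perp=\mathrm{span}(\ones_{N\times M})$; combined with the zero-sum property this forces $\nabla_\U h=\zeros$, so $(\U^\star,\Vv^\star,\Vh^\star)$ already solves~\eqref{eq:linear-system}.

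Finally, for the converse I would take any solution $(\U,\Vv,\Vh)$ of~\eqref{eq:linear-system}, which by the first paragraph is a global minimizer of $h$ over the free space, and set $\bar\U:=\U-\tfrac1{NM}\bigl(\sum_{i,j}\U_{i,j}\bigr)\ones_{N\times M}\in\CS$. By the shift invariance $h(\bar\U,\Vv,\Vh)=h(\U,\Vv,\Vh)$, which equals the free minimum of $h$ and is therefore a lower bound for the constrained minimum; since $(\bar\U,\Vv,\Vh)$ is feasible for~\eqref{prob:alt1}, it must attain the constrained minimum, and by strong convexity of $h$ on the feasible set it is the unique constrained minimizer. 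This is precisely the claimed equivalence, and the strong convexity on the codimension-one feasible subspace also shows that every solution of~\eqref{eq:linear-system} differs only by a multiple of $(\ones_{N\times M},\zeros,\zeros)$, so the recentering step is well defined.

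\textbf{Main obstacle.} The one genuinely delicate point is reconciling the two stationarity conditions: the Lagrangian optimality condition for the constrained problem is $\nabla_\U h=\lambda\ones_{N\times M}$ for some multiplier $\lambda$, whereas~\eqref{eq:linear-system} imposes the stronger $\nabla_\U h=\zeros$. Showing that $\lambda=0$ — which follows because the shift invariance makes $\nabla_\U h$ automatically mean-zero, forcing $\lambda\,NM=0$ — is exactly what legitimizes solving the singular system~\eqref{eq:linear-system} on the full space and then recentering, instead of working directly on $\CS$.
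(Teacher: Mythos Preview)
Your proof is correct and follows the same approach as the paper: set the gradient of the quadratic to zero to obtain~\eqref{eq:linear-system}, then exploit shift invariance in $\U$ to recenter into $\CS$. The paper's own proof is a two-sentence sketch that asserts these steps without justification, whereas you have carefully filled in the one nontrivial point it glosses over---namely that the Lagrange multiplier for the constraint $\U\in\CS$ vanishes because $\nabla_\U h$ is automatically mean-zero---so your argument is strictly more complete.
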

\begin{proof}
    \eqref{prob:alt1} is a quadratic function of $(\U, \Vv, \Vh)$, and setting its gradient with respect to those variables yields the linear system~\eqref{eq:linear-system}. Once we get a solution, and since the system is invariant by additive shift on $\U$, we can simply rescale it by its mean, i.e. set $\U$ to $\U - \frac{\sum_{i, j} \U_{i,j}}{NM}$ to ensure $\U\in \CS$.
\end{proof}

Solving~\eqref{eq:linear-system} exactly is too costly in practice. Instead, we suggest solving it approximately using the conjugate gradient method, which we detail in the next section. For now we summarize the above steps in Algorithm~\ref{algo-IRLS}.

\begin{algorithm}
\caption{IRLS algorithm}\label{algo-IRLS}
\begin{algorithmic}[1]
\Require $\Gv \in \R^{(N-1) \times M}, \Gh \in \R^{N \times (M-1)}, \Cv \in \R^{(N-1) \times M}, \Ch \in \R^{N \times (M-1)}, \delta >0.$
\State $\U^0 = \zeros_{N \times M}$
\State $\Vvk{0} = \SN \U - \Gv$, $\Vhk{0} = \U  \TM - \Gh$
\For{$k=0, 1, \dots$}
    \State $$\Wvk{k}_{i,j} = \sqrt{(\Cv_{i, j})^2 ((\Vvk{k})_{i,j})^2 + \delta^2}, \quad 1\leq i \leq N-1, \quad 1 \leq j \leq M $$
    \State $$\Whk{k}_{i,j} = \sqrt{(\Ch_{i, j})^2 ((\Vhk{k})_{i,j})^2 + \delta^2}, \quad 1\leq i \leq N, \quad 1 \leq j \leq M-1.$$
    \State For $(\U^{k+1}, \Vvk{k+1}, \Vhk{k+1})$, use Algorithm~\ref{algo-CGM} to approximately solve
    \begin{equation*}
    \begin{pmatrix}
     \SN^\top \SN \U^{k+1}  + \U^{k+1} \TM\TM^\top - \SN^\top \Vvk{k+1}  - \Vhk{k+1} \TM^\top  \\[5pt]
        \Cv \hadamard \Cv \hadamard \frac{1}{\Wvk{k}} \hadamard \Vvk{k+1} + \frac{1}{\tau} (\Vvk{k+1} - \SN \U^{k+1}) \\[5pt]
        \Ch \hadamard \Ch \hadamard \frac{1}{\Whk{k}} \hadamard \Vhk{k+1} + \frac{1}{\tau} (\Vhk{k+1} - \U^{k+1} \TM)
    \end{pmatrix} = \begin{pmatrix}
     \SN^\top \Gv + \Gh\TM^\top\\[5pt]
        -\frac{1}{\tau} \Gv\\[5pt]
        -\frac{1}{\tau} \Gh
    \end{pmatrix}.
\end{equation*}
\State $\U^{k+1} = \U^{k+1} - \frac{\sum_{ij}(\U^{k+1})_{ij}}{NM}$.
    \EndFor
\end{algorithmic}
\end{algorithm}

The next theorem gives a sublinear convergence rate for Algorithm~\ref{algo-IRLS}, provided the linear system~\eqref{eq:linear-system} is solved with enough accuracy. More precisely, it states that the algorithm converges as long as the function value at the iterates $(\U^{k+1}, \Vvk{k+1}, \Vhk{k+1})$ is smaller than the function value that would have been obtained if a simple gradient step had been taken on the previous iterates $(\U^k, \Vvk{k}, \Vhk{k})$.\\

\begin{theorem}
\label{thm:CV-rate}
Define $F_\delta(\U, \Vv, \Vh, \Wv, \Wh)$ as the function minimized in~\eqref{prob:alt1} and
\[
        C_{\text{max}} = \max ( \max_{ij} (\Cv_{ij}), \ \max_{ij} (\Ch_{ij})),\quad
        L = \frac{12}{\tau}  + \frac{1}{\delta} C_{\text{max}}^2. 
 \]
Let the candidate solutions $\UTilde^k, \VvkTilde{k}, \VhkTilde{k}$ at iteration $k$ be
    \begin{equation}
    \label{eq:candidate-gd-step}
        \begin{split}
            \UTilde^k &= \U^k - \frac{1}{\tau L}\left( \SN^\top \left( \SN \U^{k} - \Gv - \Vvk{k} \right)  + \left(\U^{k} \TM - \Gh - \Vhk{k} \right) \TM^\top \right) \\
            \VvkTilde{k} &= \Vvk{k} - \frac{1}{L} \left(  \Cv \hadamard \Cv \hadamard \frac{1}{\Wvk{k}} \hadamard \Vvk{k} + \frac{1}{\tau} (\Vvk{k} - \SN \U^{k} + \Gv) \right)\\
            \VhkTilde{k} &= \Vhk{k} - \frac{1}{L} \left(\Ch \hadamard \Ch \hadamard \frac{1}{\Whk{k}} \hadamard \Vhk{k} + \frac{1}{\tau} (\Vhk{k} - \U^{k} \TM + \Gh)\right)
        \end{split}
    \end{equation}
    Suppose that at each iteration of the IRLS algorithm~\ref{algo-IRLS}, the approximate solutions $(\U^{k+1}, \Vvk{k+1}, \Vhk{k+1})$ of the linear system~\eqref{eq:linear-system} satisfy
    \begin{align}
    \label{eq:sufficient-decrease}
        F_\delta(\U^{k+1}, \Vvk{k+1}, \Vhk{k+1}, \Wvk{k}, \Whk{k}) \leq F_\delta(\UTilde^{k}, \VvkTilde{k}, \VhkTilde{k}, \Wvk{k}, \Whk{k}) 
    \end{align}
    Then, after $K$ iterations, we have
    \begin{align}
    \label{eq:CV-rate}
       \boxed{F_\delta(\U^K, \Vvk{K}, \Vhk{K}) - F_\delta^* \leq \max \left( \left(\frac{1}{2}\right)^{\frac{K-1}{2}},  \frac{8L R^2}{K-1} \right),}
    \end{align}
    where $F_\delta^*$ is the optimal value of $F$ and the constant $R$ is defined as
    \begin{align}
        R &= \max_{\Ymatrix \in \Gamma} \left\{ \norm{\Ymatrix - \Ymatrix^*} \mid H_\delta(\Ymatrix) \leq H_\delta(\Ymatrix_0)\right\},
        \end{align}
    with $H_\delta$ being the function to be minimized in problem~\eqref{prob:alt1}, $\Gamma$ the feasible set of problem~\eqref{prob:alt1},
    \begin{align}
        \Ymatrix^* & = \begin{pmatrix}
            \U^* \\ \Vvk{*} \\ \Vhk{*} \\ \Wvk{*} \\ \Whk{*}
        \end{pmatrix} =  \begin{pmatrix}
            \U^* \\ \Vvk{*} \\ \Vhk{*} \\ \sqrt{\Cv \hadamard \Vvk{*} + \delta^2} \\ \sqrt{\Ch \hadamard \Vhk{*} + \delta^2} 
        \end{pmatrix}, \quad 
        \Ymatrix_0 = \begin{pmatrix}
            \U^0 \\ \Vvk{0} \\ \Vhk{0} \\ \Wvk{0} \\ \Whk{0}
        \end{pmatrix},
    \end{align} and $\begin{pmatrix}(\U^*)^\top, (\Vvk{*})^\top, (\Vhk{*})^\top\end{pmatrix}^\top$ 
        is the minimizer of $F_\delta$.
        \end{theorem}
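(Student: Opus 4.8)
Write $z_k=(\U^k,\Vvk{k},\Vhk{k})$, $W_k=(\Wvk{k},\Whk{k})$, $z^{*}=(\U^{*},\Vvk{*},\Vhk{*})$, and let $H_\delta$ denote the five‑argument function of~\eqref{prob:alt1} (so that the three‑argument $F_\delta$ of~\eqref{eq:F_delta} satisfies $F_\delta(z)=\min_{W}H_\delta(z,W)$). The plan is to read Algorithm~\ref{algo-IRLS} as an \emph{inexact} gradient‑descent scheme on the smooth convex envelope $F_\delta$ and to reduce the statement to the scalar recursion
\[
   \Delta_{k+1}\ \le\ \Delta_k-\frac{\Delta_k^2}{2LR^2},\qquad \Delta_k:=F_\delta(z_k)-F_\delta^{*}\ \ge\ 0 .
\]
Standard manipulation of such a recursion (the quantity $1/\Delta_k$ grows linearly, once the first iterations where $\Delta_k$ may be of order $LR^2$ are treated separately) gives $\Delta_K=O(LR^2/K)$, which in particular implies the boxed bound $\max\big((1/2)^{(K-1)/2},\,8LR^2/(K-1)\big)$.

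Three preliminary facts are needed. (a) $L=\tfrac{12}{\tau}+\tfrac1\delta C_{\text{max}}^2$ is a Lipschitz constant for the gradient of $z\mapsto H_\delta(z,W)$ for every weight matrix $W$ with all entries at least $\delta$ (which holds for the closed‑form weights~\eqref{eq:weight-matrices-closed-form}): the non‑quadratic terms contribute a diagonal Hessian block of operator norm at most $C_{\text{max}}^2/\delta$, and the quadratic penalty a block operator of norm at most $12/\tau$, the latter following from $\|\SN\|\le2$, $\|\TM\|\le2$ and a Gershgorin bound on the $3\times3$ matrix of block‑operator norms. (b) Since $\delta>0$, $F_\delta$ is $C^\infty$ and convex, and (arguing as in Theorem~\ref{thm:solution-existence}) $H_\delta$ is coercive on the feasible set $\Gamma$, so $\Gamma\cap\{H_\delta\le H_\delta(\Ymatrix_0)\}$ is bounded and $R<\infty$. (c) The candidate point~\eqref{eq:candidate-gd-step} equals $z_k-\tfrac1L\nabla_zH_\delta(z_k,W_k)$, and it remains in $\CS$ because this gradient has zero entrywise sum (a consequence of $\SN\ones=\zeros$ and $\TM^{\top}\ones=\zeros$), consistently with $\U^0=\zeros$ and the rescaling in line~7 of the algorithm.

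The core is the descent inequality. The update~\eqref{eq:weight-matrices-closed-form} makes $W_k\in\arg\min_{W}H_\delta(z_k,W)$, hence $H_\delta(z_k,W_k)=F_\delta(z_k)$ and $\nabla_WH_\delta(z_k,W_k)=\zeros$. Chaining the partial‑minimum inequality $F_\delta(z_{k+1})\le H_\delta(z_{k+1},W_k)$, the assumed sufficient decrease~\eqref{eq:sufficient-decrease}, the descent lemma for the $L$‑smooth map $z\mapsto H_\delta(z,W_k)$ applied at the gradient step~\eqref{eq:candidate-gd-step}, and the identity $H_\delta(z_k,W_k)=F_\delta(z_k)$, yields
\[
   F_\delta(z_{k+1})\ \le\ F_\delta(z_k)-\frac{1}{2L}\big\|\nabla_zH_\delta(z_k,W_k)\big\|^2 .
\]
Thus $F_\delta(z_k)$ is non‑increasing, so $\Ymatrix_k:=(z_k,W_k)$ stays in $\{H_\delta\le H_\delta(\Ymatrix_0)\}$ and $\|z_k-z^{*}\|\le\|\Ymatrix_k-\Ymatrix^{*}\|\le R$. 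Joint convexity of $H_\delta$ along $\Ymatrix_k-\Ymatrix^{*}$, together with $\nabla_WH_\delta(\Ymatrix_k)=\zeros$, gives $\Delta_k\le\langle\nabla_zH_\delta(z_k,W_k),\,z_k-z^{*}\rangle\le R\,\|\nabla_zH_\delta(z_k,W_k)\|$; substituting this into the displayed inequality produces the scalar recursion, from which the stated rate follows.

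I expect the main difficulty to be bookkeeping rather than any isolated deep step: one must ensure that the conjugate‑gradient inexactness enters only through~\eqref{eq:sufficient-decrease}, that the monotone quantity is the envelope $F_\delta(z_k)$ and not $H_\delta$ along the raw iterates (whose weight block lags one step behind), that $\U\in\CS$ is preserved throughout and is inert because $F_\delta$ is shift‑invariant, and that the constant $L$ and the finiteness of $R$ come out exactly as claimed. Converting the scalar recursion into the precise $\max$‑of‑two‑terms estimate is then routine.
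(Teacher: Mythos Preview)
Your proposal is correct and follows the same underlying mechanism as the paper, but the presentations differ in an instructive way. The paper's proof is essentially a one-line invocation of Theorem~4.1 in Beck (2015) on alternating minimization, followed only by the computation of the Lipschitz constant $L$: it writes out the Hessian of the quadratic penalty $s$ explicitly and applies the Gershgorin circle theorem entrywise to obtain $L_s\le 12/\tau$, then observes that the weighted diagonal block contributes at most $C_{\max}^2/\delta$. You instead unroll Beck's argument from scratch---partial-minimum identity $H_\delta(z_k,W_k)=F_\delta(z_k)$, descent lemma on the $L$-smooth map $z\mapsto H_\delta(z,W_k)$, joint convexity of $H_\delta$ together with $\nabla_W H_\delta(z_k,W_k)=0$ to bound $\Delta_k\le R\,\|\nabla_z H_\delta\|$, and the resulting scalar recursion $\Delta_{k+1}\le\Delta_k-\Delta_k^2/(2LR^2)$---and you bound $L$ by passing to the $3\times3$ matrix of block operator norms and applying Gershgorin there. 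Both routes yield the same constant $12/\tau$ and the same rate; your version is self-contained and makes explicit the two ingredients (smoothness in $z$, optimality in $W$) that the paper hides inside the citation, while the paper's version is shorter but requires the reader to check that Beck's hypothesis of exact block minimization can be relaxed to the sufficient-decrease condition~\eqref{eq:sufficient-decrease} (which the paper acknowledges in a remark).
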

\begin{proof}
    This is a direct consequence of~\cite[Theorem 4.1]{beck2015convergence}. Indeed, defining
        \begin{align*}
        s(u, v^v, v^h) = \frac{1}{2\tau} \norm{\begin{pmatrix}
        (\I_M \kron \SN) &\ -\I_{(N-1)M} &\ \zeros \\
        (\TM^\top \kron \I_N) &\ \zeros &\ -\I_{N(M-1)}
    \end{pmatrix} \begin{pmatrix}
        u \\ v^v \\ v^h
    \end{pmatrix}  - \begin{pmatrix}
        g^v \\ g^h
    \end{pmatrix}}^2,
    \end{align*}
    this result directly implies~\eqref{eq:CV-rate}, where $L$ is given by
    \begin{align}
    \label{eq:smoothness-def}
       L = L_s + \frac{1}{\delta} \lambda_{\text{max}} \left( \sum_{k=NM + 1}^{NM + (N-1) M} (c^{v}_{k})^2 e_{k} e_{k}^\top + \sum_{k=NM + (N-1)M + 1}^{MM + (N-1)M + N(M-1)} (c^h_{k})^2 e_{k } e_{k}^\top \right),
    \end{align}
    where $\{e_k\}_{k=1}^{NM + (N-1)M + N(M-1)}$ are the vectors in the canonical basis of $\R^{NM + (N-1)M + N(M-1)}$, $L_s$ is the Lipschitz constant of the gradient of $s$, which is equal to the maximum eigenvalue of the Hessian of $s$, and $\lambda_{\text{max}}(\cdot)$ denotes the maximum eigenvalue of the matrix in parentheses. We now bound $L_s$. The Hessian of $s$ is
    \begin{align*}
        \nabla^2 s(u, v^v, v^h) &= \frac{1}{\tau} \begin{pmatrix}
        (\I \kron \SN) &\ -\I &\ \zeros \\
        (\TM^\top \kron \I) &\ \zeros &\ -\I
    \end{pmatrix}^\top \begin{pmatrix}
        (\I \kron \SN) &\ -\I &\ \zeros \\
        (\TM^\top \kron \I) &\ \zeros &\ -\I
    \end{pmatrix}\\
    &= \frac{1}{\tau}\begin{pmatrix}
        (\I \kron \SN^\top \SN) + (\TM \TM^\top \kron \I) &\ -(\I \kron \SN)^\top &\ - (\TM \kron \I) \\
        -( \I \kron \SN) &\ \I &\ \zeros \\
        -(\TM^\top \kron \I) &\ \zeros &\ \I
    \end{pmatrix}
    \end{align*}
    We will make use of Gershgorin circle theorem to prove a bound on the largest eigenvalue of $\nabla^2 s$~\cite{gerschgorin1931uber}. The theorem states that all the eigenvalues of $\nabla^2 s$ lie within the union of the discs
    \begin{align}
    \label{eq:greshgorin-disk}
        D_i = \left\{ \lambda \in \R \mid |\lambda - (\nabla^2 s)_{ii}| \leq \sum_{j\not = i} |(\nabla^2 s)_{ij}| \right\},
    \end{align}
    for $1\leq i \leq NM + (N-1)M + N (M-1)$.
    
    Writing out the definitions of $\I_M \kron \SN$, $\I_M \kron \SN^\top \SN$, $\TM\TM^\top \kron \I_N$ and $\TM \kron \I_N$ (see Appendix~\ref{sec:matrix-definition}), one can see that all diagonal elements of $\nabla^2 s$ are positive (which can also be deduced from the fact that $\nabla^2 s$ is the Hessian of a convex quadratic) and bounded above by $\frac{4}{\tau}$, and that 
    \begin{align*}
        \max_{i }\sum_{j \not = i} |(\nabla^2s)_{ij}| = \frac{8}{\tau}.
    \end{align*}
    This value of $\frac{8}{\tau}$ is in particular attained for lines where the diagonal element is maximal, i.e. equal to $\frac{4}{\tau}$. This is the case for example at line $N+2$. Going back to the discs defined in~\eqref{eq:greshgorin-disk}, this implies that the maximum eigenvalue of $\nabla^2_s$, and therefore $L_s$, is such that
    \begin{align*}
        L_s - \frac{4}{\tau} \leq \frac{8}{\tau}, 
    \end{align*}
    and thus $ L_s \leq \frac{12}{\tau}$.

    We now bound the right part of~\eqref{eq:smoothness-def}, i.e. the maximum eigenvalue of
    \begin{align*}
         \left(  \sum_{k=NM + 1}^{NM + (N-1) M} (c^{v}_{k})^2 e_{k} e_{k}^\top + \sum_{k=NM + (N-1)M + 1}^{MM + (N-1)M + N(M-1)} (c^h_{k})^2 e_{k } e_{k}^\top \right).
    \end{align*}
    This is a diagonal matrix, whose first $NM$ elements are zero, and the remaining are the squares of the coefficient vectors $c^v$ and $c^h$. Therefore its maximum eigenvalue is given by
    \begin{align*}
  \max\left( \max_{k} \left\{(c^v_k)^2\right\}, \ \max_{k} \left\{(c^h_k)^2\right\} \right) = \max \left( \max_{ij} \left\{(\Cv_{ij})^2\right\}, \ \max_{ij} \left\{(\Ch_{ij})^2\right\}\right)
    \end{align*}
    hence the desired result.
\end{proof}

\begin{remark}
    The result in~\cite[Theorem 4.1]{beck2015convergence} is actually stated for an exact minimization step, i.e. when the linear system~\eqref{eq:linear-system} is solved exactly. However, upon closer inspection of the proof, one notices that the result holds as long as a sufficient decrease property holds. In our case this decrease property translates to condition~\eqref{eq:sufficient-decrease}.
\end{remark}

\begin{remark}
    In our experiments, we always checked that condition~\eqref{eq:sufficient-decrease} was satisfied after running CG. In practice this was always the case, even if CG was run for just a few iterations. We have also tried to implement Algorithm~\ref{algo-IRLS} using only the candidate updates~\eqref{eq:candidate-gd-step} instead of running CG, but this did not yield competitive performance.
\end{remark}

\subsection{Conjugate gradient}
\label{sec:CGM}

Since the updates for $\Wv$ and $\Wh$ have simple closed form solutions, the main numerical difficulty of Algorithm~\ref{algo-IRLS} relies in solving the linear system~\eqref{eq:linear-system}. In large dimensions such as the ones encountered in InSAR imagery, direct linear solvers will be too slow and we instead decide to use a carefully preconditioned conjugate gradient (CG) method~\cite{nocedal2006conjugate}. We detail the method in the specific case of problem~\eqref{eq:linear-system}. First let us simplify the notation by defining $n = NM + (N-1)M + N(M-1)$ and
\begin{align}\label{def:linear-system-simplification}
    \Bmatrix_1 := \frac{1}{\tau} (\SN^\top \Gv + \Gh \TM^\top),~\Bmatrix_2 := - \frac{1}{\tau} \Gv,~\Bmatrix_3 := -\frac{1}{\tau} \Gh.
\end{align}
 In order to make the algorithm more readable, we will vectorize the linear system using the following proposition. 

\begin{proposition}
    Defining the matrix $\Amatrix \in \R^{n \times n}$ as
    \begin{equation}
    \label{eq:matrix-soft-constraints}
        \Amatrix := \begin{pmatrix}
              \frac{1}{\tau} \left((\I_M \kron \SN^\top \SN) + (\TM\TM^\top \kron \I_N)\right) &\ \ -\frac{1}{\tau}(\I_M \kron \SN^\top) & \ \  -\frac{1}{\tau}(\TM \kron \I_N) \\[5pt]
            -\frac{1}{\tau}(
            \I_M \kron \SN) & \ \  \Dv + \frac{1}{\tau} \I &\ \ \zeros\\[5pt]
            -\frac{1}{\tau}(\TM^\top \kron \I_N) & \ \  \zeros & \ \    \Dh + \frac{1}{\tau} 
            \I
        \end{pmatrix},
    \end{equation}
    the linear system~\eqref{eq:linear-system} is equivalent to the following linear system
    \begin{align}
    \label{eq:linear-system-vectorized}
        \Amatrix \begin{pmatrix}
            u\\ v^v \\ v^h 
        \end{pmatrix} = b,
    \end{align}
    where 
    \begin{align}\label{def:linear-system-vectorized-simplification}
    \begin{split}
        \Dv &:=\diag{\myvec{\Cv \hadamard \Cv \hadamard \frac{1}{\Wv}}}, \\
        \Dh &:=\diag{\myvec{\Ch \hadamard \Ch \hadamard \frac{1}{\Wh}}}, \\
       % b_1 &:= \myvec{\Bmatrix_1}, b_2 := \myvec{\Bmatrix_2}, b_3 := \myvec{\Bmatrix_3},\\
        b& := \begin{pmatrix}
            \myvec{\Bmatrix_1} \\ \myvec{\Bmatrix_2} \\ \myvec{\Bmatrix_3}
        \end{pmatrix}.
        \end{split}
    \end{align}
    Indeed,  $(\U, \Vv, \Vh)$ solves~\eqref{eq:linear-system} if and only if $(u, v^v, v^h)$ solves~\eqref{eq:linear-system-vectorized} with $u=\myvec{\U}$, $v^v = \myvec{\Vv}$, $v^h =\myvec{\Vh}$.
\end{proposition}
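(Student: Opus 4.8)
The plan is to apply the vectorization identity~\eqref{prop:vec-kron} to each block row of the matrix equation~\eqref{eq:linear-system}, turning every matrix--matrix product into a matrix--vector product and every Hadamard product against a fixed matrix into multiplication by a diagonal matrix. Since $\myvec{\cdot}$ is a linear bijection, matching the resulting vector equation with~\eqref{eq:linear-system-vectorized} will establish the equivalence, and the correspondence $u = \myvec{\U}$, $v^v = \myvec{\Vv}$, $v^h = \myvec{\Vh}$ will give the final claim.

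First I would handle the top block row. Writing $\SN^\top \SN \U = \SN^\top \SN\, \U\, \I_M$ and $\U \TM\TM^\top = \I_N\, \U\, \TM\TM^\top$, identity~\eqref{prop:vec-kron} yields $\myvec{\SN^\top \SN \U} = (\I_M \kron \SN^\top \SN) u$ and $\myvec{\U \TM\TM^\top} = (\TM\TM^\top \kron \I_N) u$. Likewise $\myvec{\SN^\top \Vv} = (\I_M \kron \SN^\top) v^v$ and $\myvec{\Vh \TM^\top} = (\TM \kron \I_N) v^h$, using $(\TM^\top)^\top = \TM$. Collecting these with the factor $\tfrac{1}{\tau}$ reproduces the first block row of $\Amatrix$ from~\eqref{eq:matrix-soft-constraints}, while the right-hand side $\tfrac{1}{\tau}(\SN^\top \Gv + \Gh \TM^\top) = \Bmatrix_1$ vectorizes to $\myvec{\Bmatrix_1}$.

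Next I would treat the second block row. The key elementary observation is that for fixed matrices $\myvec{\Cv \hadamard \Cv \hadamard \tfrac{1}{\Wv} \hadamard \Vv} = \diag{\myvec{\Cv \hadamard \Cv \hadamard \tfrac{1}{\Wv}}}\, v^v = \Dv v^v$, since the Hadamard product against a fixed matrix acts coordinate-wise. Combining this with $\myvec{\tfrac{1}{\tau}(\Vv - \SN \U)} = \tfrac{1}{\tau} v^v - \tfrac{1}{\tau}(\I_M \kron \SN) u$ gives exactly the middle block row $-\tfrac{1}{\tau}(\I_M \kron \SN) u + (\Dv + \tfrac{1}{\tau}\I) v^v$, with right-hand side $\myvec{\Bmatrix_2} = -\tfrac{1}{\tau}\myvec{\Gv}$. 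The third block row is identical after swapping the $v$-quantities for the $h$-quantities and using $\myvec{\U \TM} = \myvec{\I_N\, \U\, \TM} = (\TM^\top \kron \I_N) u$, which produces $-\tfrac{1}{\tau}(\TM^\top \kron \I_N) u + (\Dh + \tfrac{1}{\tau}\I) v^h$ with right-hand side $\myvec{\Bmatrix_3}$.

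Finally I would check that the three block rows assembled together are precisely $\Amatrix (u^\top, (v^v)^\top, (v^h)^\top)^\top = b$ on $\R^n$ with $n = NM + (N-1)M + N(M-1)$, and then invoke the bijectivity of $\myvec{\cdot}$ to transfer solutions back and forth. I do not expect a genuine obstacle here; the only points requiring care are the placement of transposes inside the Kronecker products — the identity reads $(\Zmatrix^\top \kron \Xmatrix)$ rather than $(\Xmatrix \kron \Zmatrix^\top)$ — and correctly padding each product with $\I_M$ or $\I_N$ so that the block sizes $NM$, $(N-1)M$, $N(M-1)$ line up.
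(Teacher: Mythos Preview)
Your proposal is correct and follows exactly the paper's approach: apply the vectorization identity~\eqref{prop:vec-kron} to each matrix--matrix product and convert the Hadamard products into multiplication by the diagonal matrices $\Dv, \Dh$. The paper's proof is simply a terser version of what you wrote.
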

\begin{proof}
    This is a direct consequence of the vectorization property~\eqref{prop:vec-kron} and the fact that 
    \begin{align*}
        \myvec{\Cv \hadamard \Cv \hadamard \frac{1}{\Wv} \hadamard \Vv} &= \diag{\myvec{\Cv \hadamard \Cv \hadamard \frac{1}{\Wv}}} \myvec{\Vv} \\
        \myvec{\Ch \hadamard \Ch \hadamard \frac{1}{\Wh} \hadamard \Vh} &= \diag{\myvec{\Ch \hadamard \Ch \hadamard \frac{1}{\Wh}}} \myvec{\Vh}
    \end{align*}
\end{proof}

%For ease of notation, let us define
%We also define the linear application
%\begin{align}
%\label{eq:linear-system-linear-application}
%    L(U, \Vv, \Vh) := \begin{pmatrix}
%        \Dv \hadamard \Vv + \frac{1}{\tau} (\Vv - S U) \\[5pt]
%        \Dh \hadamard \Vh + \frac{1}{\tau} (\Vh - U T)\\[5pt]
%         \frac{1}{\tau} \left( \SN^\top \SN \U  + U TT^\top - S^\top \Vv  - \Vh T^\top \right)
%    \end{pmatrix}.
%\end{align}
%We can then rewrite problem~\eqref{eq:linear-system} as
%\begin{equation}
%\label{eq:linear-system2}
%    L(U, \Vv, \Vh) = B.
%\end{equation}

\begin{algorithm}
\caption{Conjugate gradient (CG)}\label{algo-CGM}
\begin{algorithmic}[1]
\Require $b \in \R^n$, matrix $A\in \R^{n \times n}$ or a function to compute the linear map $\Amatrix p$ for vectors $p$, preconditioner $\Dmatrix \succeq 0$.
%\State Compute $b$ as in~\eqref{def:linear-system-vectorized-simplification}.
\State Initialize $u^0, v^{v, 0}$ and $v^{h, 0}$.
\State Set $$r_l = b - \Amatrix \begin{pmatrix}
    u^0 \\ v^{v, 0} \\ v^{h, 0} 
\end{pmatrix}$$
\State Solve $\Dmatrix z_l = r_l$
\State $p_l = z_l$
\State $\rho_l = r_l^\top z_l $
\For{$l=0, 1, \dots$}
    \State $\alpha_l = \rho_l / (p_l^\top \Amatrix p_l)$
    \State Set \begin{align*}
        \begin{pmatrix}
    u^{l+1} \\ v^{v, l+1} \\ v^{h, l+1} 
\end{pmatrix} = \begin{pmatrix}
    u^l \\ v^{v, l} \\ v^{h, l}
\end{pmatrix} + \alpha_l p_l
    \end{align*}
    \State $r_{l+1} = r_l - \alpha_l \Amatrix p_l$
    \State Solve $\Dmatrix z_{l+1} = r_{l+1}$
    \State $\rho_{l+1} = (r_{l+1})^\top z_{l+1}$
    \State $\beta = \rho_{l+1}/ \rho_l$
    \State $p_{l+1} = z_{l+1} + \beta p_l$
    \EndFor
\end{algorithmic}
\end{algorithm}
For completeness, we detail the conjugate gradient method for solving problem~\eqref{eq:linear-system-vectorized} in Algorithm~\ref{algo-CGM}. We emphasize that in practice the system is never vectorized and the matrix $\Amatrix$ is never constructed. Instead, we keep all variables in matrix form and matrix vector products 
% of the form 
% \[\Amatrix \begin{pmatrix}
%     u\\ v^h\\ v^v
% \end{pmatrix}
% \]
are computed using the linear map defined in~\eqref{eq:linear-system}. The reason we decided to introduce $\Amatrix$ in this section is two-fold. First it allows to write out the conjugate gradient method of Algorithm~\ref{algo-CGM} in what we believe to be a more reader-friendly way. Second, and more importantly, this way of writing will shed some light on how to pick the preconditioner, which is the key to fast convergence of the conjugate gradient method. This is the topic of the next section.

\begin{remark}
    \label{rem:ls-singular}
    The matrix $\Amatrix$ is singular. Its nullspace is one-dimensional, consisting of the vectors that are constant in the first $NM$ entries and $0$ everywhere else (this is due to the block $(\I \kron \SN^\top \SN) + (\TM \TM^\top \kron \I)$ being invariant under additive scaling). The conjugate gradient method still converges for singular matrices~\cite{kaasschieter1988preconditioned, hayami2018convergence}.
\end{remark}

\subsection{Choosing the preconditioner}\label{sec:precond}

The convergence of CG directly depends on the conditioning of the matrix~$\Amatrix$. In later iterations of IRLS when the weight matrices $\Wv$ and $\Wh$ have many small entries, the matrix $\Amatrix$ becomes very ill-conditioned. To overcome this issue, the most common trick is to use a preconditioner, i.e. a matrix $\Dmatrix = \Cmatrix^\top \Cmatrix$ such that $\Cmatrix^{-\top} \Amatrix \Cmatrix^{-1}$ is better-conditioned than $\Amatrix$, and such that linear systems of the form $\Dmatrix z = r$ are very cheap to solve in practice. For more on the topic of preconditioning, we refer the interested reader to \cite{kelley1995iterative, nocedal2006conjugate}. It is interesting to note that in~\cite{ghiglia1994robust}, the authors suggest using a preconditioned conjugate gradient method to solve the $L^2$-norm phase unwrapping problem, whose structure is close to the linear system we solve at each IRLS iteration. Their preconditioner is however different from the one we present next and proved to be less efficient in practice.

A common choice of preconditioner is the so-called Jacobi preconditioner, where $\Dmatrix$ is set to a diagonal matrix whose diagonal matches the diagonal of $\Amatrix$. In our case we choose the more efficient block diagonal preconditioner, i.e. we define $\Dmatrix$ as
\begin{align}
\label{eq:preconditioner-definition}
    \Dmatrix := \begin{pmatrix}
            \frac{1}{\tau} \left((\I \kron \SN^\top \SN) + (\TM\TM^\top \kron \I)\right)  &\ \ \zeros & \ \ \zeros \\[5pt]
            \zeros & \ \ \Dv + \frac{1}{\tau} \I &\ \ \zeros\\[5pt]
           \zeros & \ \  \zeros & \ \    \Dh + \frac{1}{\tau} \I
        \end{pmatrix}.
\end{align}
For solving $\Dmatrix z = r$, we proceed as follows. Writing 
\[
z = \begin{pmatrix}
    z_a^\top & z_b^\top & z_c^\top
\end{pmatrix}^\top \mbox{ and } r = \begin{pmatrix}
    r_a^\top & r_b^\top & r_c^\top
\end{pmatrix}^\top
\] 
the problem comes down to solving the following three smaller linear systems for $z_a, z_b$ and $z_c$ 
\begin{align}
    \label{eq:precond-subsystems}
    \left((\I \kron \SN^\top \SN) + (\TM\TM^\top \kron \I)\right) z_a &= \tau r_a \\
    (\Dv + \frac{1}{\tau} \I) z_b &= r_b \nonumber\\
    (\Dh + \frac{1}{\tau} \I) z_c &= r_c\nonumber
\end{align}
Since $\Dv$ and $\Dh$ are diagonal matrices, the last two subproblems are numerically fast to solve. Problem~\eqref{eq:precond-subsystems} can be recast as
\begin{align}
    \label{eq:lyapunov-equation}
    \SN^\top \SN \Zmatrix_a + \Zmatrix_a \TM\TM^\top = \tau \Rmatrix_a,
\end{align}
where $z_a = \myvec{\Zmatrix_a}$ and $r_a = \myvec{\Rmatrix_a}$. This equation is commonly known as a Sylvester equation (in the case where $N=M$, then $\TM = \SN^\top$, and the equation is known as a continuous-time Lyapunov equation). The Bartels-Stewart algorithm~\cite{bartels1972solution} is a common way to solve such equations by first computing the Schur decompositions of $\SN^\top \SN$ and $\TM\TM^\top$ and then solving for $\Zmatrix_c$ after some careful algebraic manipulations. In our case, since $\SN^\top \SN$ and $\TM\TM^\top$ are positive semidefinite matrices, the Schur decompositions are eigendecompositions
\begin{equation}
    \label{eq:eigendecompositions}
    \SN^\top \SN = \Pmatrix_S \Lambdamtrix_S \Pmatrix_S^\top, \quad \TM\TM^\top = \Pmatrix_T \Lambdamtrix_T \Pmatrix_T^\top,
\end{equation}
where $\Lambdamtrix_S, \Lambdamtrix_T$ are diagonal matrices, and $\Pmatrix_S$, $\Pmatrix_T$ are orthogonal matrices. Plugging this into~\eqref{eq:lyapunov-equation} gives
\begin{align}
    &\Pmatrix_S \Lambdamtrix_S \Pmatrix_S^\top \Zmatrix_a + \Zmatrix_a \Pmatrix_T \Lambdamtrix_T \Pmatrix_T^\top = \tau \Rmatrix_a\\
    \iff & \Lambdamtrix_S \Pmatrix_S^\top \Zmatrix_a \Pmatrix_T + \Pmatrix_S^\top \Zmatrix_a \Pmatrix_T \Lambdamtrix_T = \tau \Pmatrix_S^\top \Rmatrix_a \Pmatrix_T\\
    \iff & \left(( \I \kron \Lambdamtrix_S) + (\Lambdamtrix_T \kron \I) \right) \myvec{\Pmatrix_S^\top \Zmatrix_a \Pmatrix_T} = \tau \myvec{\Pmatrix_S^\top \Rmatrix_a \Pmatrix_T}\\
    \label{eq:lyapunov-diag-system}
    \iff & \left(( \I \kron \Lambdamtrix_S) + (\Lambdamtrix_T \kron \I) \right) \myvec{\Zmatrix'} = \tau \myvec{\Pmatrix_S^\top \Rmatrix_a \Pmatrix_T} \mbox{ and } \Zmatrix_a = \Pmatrix_S \Zmatrix' \Pmatrix_T^\top
\end{align}
Since $\Lambdamtrix_S$ and $\Lambdamtrix_T$ are diagonal, so is $( \I \kron \Lambdamtrix_S) + (\Lambdamtrix_T \kron \I)$, and system~\eqref{eq:lyapunov-diag-system} can be solved efficiently.

Observe that throughout the CG iterations, the linear system~\eqref{eq:lyapunov-equation} always has the same structure, and only the right-hand side $\Rmatrix_a$ changes. This implies that we can precompute the matrices $\Pmatrix_S, \Pmatrix_T, \Lambdamtrix_S, \Lambdamtrix_T$ before the start of the IRLS algorithm~\ref{algo-IRLS}  and keep them in memory. The main numerical bottleneck then lies in the computation of the matrix products $\Pmatrix_S^\top \Rmatrix_a \Pmatrix_T \text{ and } \Pmatrix_S \Zmatrix' \Pmatrix_T^\top$.

\subsubsection{Equivalence of preconditioned CG with better-conditioned linear system}\label{sec:eq-CG-PCG}

We point out that while using CG for singular systems is a well studied task (see~\cite{kaasschieter1988preconditioned, hayami2018convergence} and remark~\ref{rem:ls-singular}), in our case the preconditioner $\Dmatrix$ is also singular, but we claim that this is not an issue with the current choice of the preconditioner. The main reason is that both $\Amatrix$ and $\Dmatrix$ share the same one-dimensional nullspace, and that $r_k$ is always in the range of $\Amatrix$, and thus of $\Dmatrix$, so that the linear system on line 4 of Algorithm~\ref{algo-CGM} is always well-defined. 

%More precisely, we show in Appendix~\ref{sec:preconditioner-details} that solving $Ax = b$ with CG with the proposed preconditioner $M$ is equivalent to solving $C^* A C^* x = C^* b$ with CG without a preconditioner. 

 %In this section, we detail why it is fine to work with a singular preconditioner, and show the equivalence between CG with a preconditioner and solving a slightly modified linear system without a preconditioner. 
 
More precisely, let us write the eigendecompositions of $\Amatrix$ and $\Dmatrix$ as
\[
        \Amatrix = \sum_{i=1}^n \lambda_i q_i q_i^\top, ~\Dmatrix = \sum_{i=1}^n \gamma_i d_i d_i^\top,
\]
where $\{q_i\}_{i=1}^n$ and $\{d_i\}_{i=1}^n$ are orthonormal bases of $\R^n$. Since $\Amatrix$ and $\Dmatrix$ have the same one-dimensional nullspace, we assume without loss of generality that $\lambda_1 = \gamma_1 = 0$ (namely that $\Amatrix q_1 = 0$ and $\Dmatrix d_1 = 0$), and that $q_1 = d_1$. We then get that $\lambda_i > 0$ and $\gamma_i > 0$ for all $i\geq 2$. Let us then define

\[
\Cmatrix = \sum_{i=2}^n \sqrt{\gamma_i} d_i d_i^\top,~\Cmatrix^* = \sum_{i=2}^n \frac{1}{\sqrt{\gamma_i}} d_i d_i^\top.
\]
    %For simplicity, we restate the conjugate gradient algorithm for solving $Ax = b$ with preconditioner $M$ starting from some $x_0$ in algorithm~\ref{algo-CGM2}. 
    For simplicity, we will write $x_l = \begin{pmatrix}
        u^l \\ v^{v, l} \\ v^{h, l}
    \end{pmatrix} \in \R^n$.

    \begin{restatable}{proposition}{equivalencePrecond}
    \label{prop:equivalence-precond}
        Solving 
        \begin{align}
        \label{eq:linear-system-PCG}
            \Amatrix x = b
        \end{align}with conjugate gradient with preconditioner $\Dmatrix$ starting from $x_0$ is equivalent to solving the linear system      \begin{align}\label{eq:equivalent-linear-system-PCG}
        \Cmatrix^* \Amatrix \Cmatrix^* \tilde{x} = \Cmatrix^* b
        \end{align}
        with conjugate gradient without a preconditioner starting from $\tilde{x}_0 = \Cmatrix x_0$     
    \end{restatable}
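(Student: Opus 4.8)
The plan is to run the two algorithms side by side and verify, by induction on the iteration counter $l$, that their iterates are related by the linear maps $\Cmatrix$ and $\Cmatrix^*$; this is the classical equivalence between preconditioned CG and CG on the symmetrically preconditioned system, and the only real work is to adapt it to the fact that here the preconditioner $\Dmatrix$ is singular. First I would record the elementary identities that make this possible. Let $\Pi := \sum_{i=2}^n d_i d_i^\top$ denote the orthogonal projector onto $\mathrm{range}(\Dmatrix) = \mathrm{range}(\Amatrix) = \mathrm{span}(d_1)^\perp$. From the definitions of $\Cmatrix$ and $\Cmatrix^*$ one gets $\Cmatrix\Cmatrix^* = \Cmatrix^*\Cmatrix = \Pi$ and $\Cmatrix^*\Cmatrix^* = \Dmatrix^+$, the Moore--Penrose pseudoinverse of $\Dmatrix$; and since $\Amatrix$ and $\Dmatrix$ are symmetric with the same nullspace $\mathrm{span}(d_1)$, we obtain $\Amatrix\Pi = \Pi\Amatrix = \Amatrix$ and hence $\Pi\Amatrix\Pi = \Amatrix$. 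Because $b \in \mathrm{range}(\Amatrix)$ and, by Remark~\ref{rem:ls-singular}, every residual $r_l$ stays in $\mathrm{range}(\Amatrix) = \mathrm{range}(\Dmatrix)$, the preconditioner solves $\Dmatrix z = r$ in Algorithm~\ref{algo-CGM} are well posed once we read them as $z = \Dmatrix^+ r$ — which is precisely what the eigendecomposition-based Sylvester solver of Section~\ref{sec:precond} returns, since it sets the zero-eigenvalue component to zero. I would also note that $\Cmatrix^*\Amatrix\Cmatrix^*$ is symmetric positive semidefinite with the same one-dimensional nullspace $\mathrm{span}(d_1)$, so unpreconditioned CG on \eqref{eq:equivalent-linear-system-PCG} is itself covered by the singular-CG theory.

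Next, writing $x_l, r_l, z_l, p_l, \rho_l, \alpha_l, \beta_l$ for the quantities generated by preconditioned CG on $\Amatrix x = b$ started at $x_0$, and $\tilde x_l, \tilde r_l, \tilde p_l, \tilde\rho_l, \tilde\alpha_l, \tilde\beta_l$ for those generated by plain CG on $\Cmatrix^*\Amatrix\Cmatrix^*\tilde x = \Cmatrix^* b$ started at $\tilde x_0 = \Cmatrix x_0$, I would prove by induction that $\tilde x_l = \Cmatrix x_l$, $\tilde r_l = \Cmatrix^* r_l$, $\tilde p_l = \Cmatrix p_l$, $\tilde\rho_l = \rho_l$, $\tilde\alpha_l = \alpha_l$ and $\tilde\beta_l = \beta_l$ for all $l$. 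The base case uses $\tilde r_0 = \Cmatrix^* b - \Cmatrix^*\Amatrix\Cmatrix^*\Cmatrix x_0 = \Cmatrix^*(b - \Amatrix\Pi x_0) = \Cmatrix^*(b - \Amatrix x_0) = \Cmatrix^* r_0$, then $z_0 = \Dmatrix^+ r_0 = \Cmatrix^*\Cmatrix^* r_0 = \Cmatrix^*\tilde r_0$ and $p_0 = z_0$, so $\Cmatrix p_0 = \Pi\tilde r_0 = \tilde r_0 = \tilde p_0$ (as $\tilde r_0 \in \mathrm{range}(\Dmatrix)$), and $\rho_0 = r_0^\top z_0 = (\Cmatrix^* r_0)^\top\tilde r_0 = \norm{\tilde r_0}^2 = \tilde\rho_0$. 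For the inductive step I would push these identities through the lines of Algorithm~\ref{algo-CGM}, the two crucial cancellations being $\Cmatrix^*\Amatrix p_l = \Cmatrix^*\Amatrix\Cmatrix^*\tilde p_l$ (because $p_l \in \mathrm{range}(\Dmatrix)$, hence $\Cmatrix^*\Cmatrix p_l = \Pi p_l = p_l$) and $p_l^\top\Amatrix p_l = \tilde p_l^\top\Cmatrix^*\Amatrix\Cmatrix^*\tilde p_l$ (because $\Pi\Amatrix\Pi = \Amatrix$); these yield $\alpha_l = \tilde\alpha_l$, whence $\Cmatrix x_{l+1} = \tilde x_l + \tilde\alpha_l\tilde p_l = \tilde x_{l+1}$, $\Cmatrix^* r_{l+1} = \tilde r_l - \tilde\alpha_l\Cmatrix^*\Amatrix\Cmatrix^*\tilde p_l = \tilde r_{l+1}$, $z_{l+1} = \Cmatrix^*\tilde r_{l+1}$, $\rho_{l+1} = \norm{\tilde r_{l+1}}^2 = \tilde\rho_{l+1}$, $\beta = \tilde\beta$, and finally $\Cmatrix p_{l+1} = \Pi\tilde r_{l+1} + \tilde\beta\tilde p_l = \tilde r_{l+1} + \tilde\beta\tilde p_l = \tilde p_{l+1}$, closing the induction. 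Since every search direction $p_j$ lies in $\mathrm{range}(\Dmatrix)$, the iterate decomposes as $x_l = \Cmatrix^*\tilde x_l + (\I - \Pi)x_0$, so the two runs carry exactly the same information up to the frozen component of $x_l$ along the common nullspace $\mathrm{span}(d_1)$, which is immaterial for $\Amatrix x = b$; this is the asserted equivalence.

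The hard part — everything else being the textbook nonsingular equivalence — is the bookkeeping forced by the singular preconditioner: one must (i) interpret $\Dmatrix z = r$ as $z = \Dmatrix^+ r$ and justify it via $r_l \in \mathrm{range}(\Dmatrix)$; (ii) use $\Cmatrix^*$ as an inverse of $\Cmatrix$ only in the sense $\Cmatrix\Cmatrix^* = \Cmatrix^*\Cmatrix = \Pi$; and (iii) check that the nullspace component of the preconditioned-CG iterate never changes and never enters any scalar quantity, so that applying $\Cmatrix$ loses nothing relevant. All three reduce to the identities $\Cmatrix\Cmatrix^* = \Cmatrix^*\Cmatrix = \Pi$, $\Amatrix\Pi = \Pi\Amatrix = \Amatrix$ and $r_l \in \mathrm{range}(\Amatrix)$ recorded at the outset; once those are in place, the term-by-term matching in the inductive step is purely mechanical.
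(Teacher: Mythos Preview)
Your proof is correct and follows essentially the same approach as the paper: an induction on the CG iteration counter showing the relations $\tilde{x}_l = \Cmatrix x_l$, $\tilde{r}_l = \Cmatrix^* r_l$, $\tilde{p}_l = \Cmatrix p_l$, $\tilde{\rho}_l = \rho_l$, $\tilde{\alpha}_l = \alpha_l$, with the singular preconditioner handled by interpreting $\Dmatrix z = r$ as $z = \Cmatrix^*\Cmatrix^* r$ and repeatedly using that $\Cmatrix\Cmatrix^* = \I_n - d_1 d_1^\top$ acts as the identity on $\mathrm{range}(\Amatrix)$. Your use of the projector $\Pi$ and the explicit statement of the induction hypothesis make the argument slightly more streamlined than the paper's step-by-step unrolling, but the underlying identities and logic are the same.
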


    This is a known result when the matrix $\Dmatrix$ is non-singular, and our proof is basically the same. We provide it for completeness in Appendix~\ref{sec:proof-equivalence-precond}.

%\begin{algorithm}
%\caption{Conjugate gradient}\label{algo-CGM2}
%\begin{algorithmic}[1]
%\State Set $$r_l = b - A x_0$$
%\State Solve $M z_l = r_l$
%\State $p_l = z_l$
%\State $\rho_l = r_l^\top z_l $
%\For{$l=0, 1, \dots$}
 %   \State $\alpha_l = \rho_l / (p_l^\top A p_l)$
  %  \State $x_{l+1} = x_l + \alpha_l p_l$
   % \State $r_{l+1} = r_l - \alpha_l A p_l$
    %\State Solve $M z_{l+1} = r_{l+1}$
    %\State $\rho_{l+1} = (r_{l+1})^\top z_{l+1}$
    %\State $\beta = \rho_{l+1}/ \rho_l$
    %\State $p_{l+1} = z_{l+1} + \beta p_l$
    %\EndFor
%\end{algorithmic}
%\end{algorithm}
    
    %We now argue that this algorithm is equivalent to solving 
    %\begin{align}
    %\label{eq:modified-ls-for-CG}
     %   C^* A C^* \tilde{x} = C^*b
    %\end{align} 
    %using conjugate gradient starting from $\tilde{x}_0 = C x_0$ without a preconditioner. 

\subsubsection{Numerical benefits of the preconditioner}
To conclude this section, we focus on showing the numerical benefits of the preconditioner defined above. Before we do so, let us recall two important properties about the convergence of CG. Define
\begin{align*}
    \rho(\Amatrix) &:= \frac{\sqrt{\kappa(\Amatrix) } - 1}{\sqrt{\kappa(\Amatrix)} + 1}
\end{align*}
where $\kappa(\Amatrix)$ is the condition number of $\Amatrix$, namely the ratio of the largest eigenvalue over the smallest strictly positive eigenvalue. Then the residual $r_l = \Amatrix x_l - b$ converges to $0$ at the rate $\rho(\Amatrix)^l$ \cite{hayami2018convergence}. Now, for illustrative purposes, assume $\Amatrix$ is invertible. Then, for any polynomial $Q_l$ of degree $l$ such that $Q_l(0) = 1$, we have
\begin{align}
    \label{eq:CG-convergence-polynomial}
    \frac{\norm{
\Amatrix x_l - b}_{\Amatrix^{-1}}}{\norm{\Amatrix x_0 - b}_{\Amatrix^{-1}}} \leq \max_{z \in \sigma(\Amatrix)} | Q_l(z)|,
\end{align}
where $\sigma(\Amatrix)$ is the set of eigenvalues of $\Amatrix$~\cite[Corollary 2.2.1]{kelley1995iterative}. The same results hold for the convergence of the preconditioned system $\Cmatrix^* \Amatrix \Cmatrix^* \tilde{x} = \Cmatrix^* b$, in particular by defining $\rho(\Cmatrix^* \Amatrix \Cmatrix^*)$ similarly.

This convergence bound~\eqref{eq:CG-convergence-polynomial} highlights the fact that clustered eigenvalues can significantly help the convergence of CG. Indeed, the more clustered the eigenvalues are, the easier it is to find a polynomial $Q_l$ which has low absolute value for all of them. 

Although our matrices $\Amatrix$ and $\Cmatrix^* \Amatrix \Cmatrix^*$ are not invertible, we hypothesize that a result similar to~\eqref{eq:CG-convergence-polynomial} holds for singular matrices (just like it is true that the convergence rate in $\rho(\Amatrix)^l$ holds for singular matrices $\Amatrix$ \cite{hayami2018convergence}). We shift our focus to the comparison of the distribution of the (strictly positive) eigenvalues and of the condition number of $\Amatrix$ and~$\Cmatrix^* \Amatrix \Cmatrix^*$. To do so, we generate matrices $\Dv$ and $\Dh$ with entries uniformly distributed between $0$ and $1/\delta$ (since, assuming $\Cv$ and  $\Ch$ are unity, this is the interval in which the entries of $\Dv$ and $\Dh$ lie). With those, we construct $\Amatrix$ and $\Dmatrix$ from~\eqref{eq:matrix-soft-constraints} and~\eqref{eq:preconditioner-definition} respectively. We set $\delta = 10^{-6}$ as in the experimental section~\ref{sec:experiments}.

We plot in Figure~\ref{fig:eigenvalues-distribution} the spectrum of the matrix $\Amatrix$ and the spectrum of the matrix $\Cmatrix^* \Amatrix \Cmatrix^*$ for $N = M = 16$. We observe a nice clustering of the eigenvalues of $\Cmatrix^*\Amatrix\Cmatrix^*$ around 1. On the other hand, the distribution of the eigenvalues of $\Amatrix$ seem to be almost continuous from about 1 to over $10^5$.

\begin{figure}[ht]

\centering
\begin{subfigure}{.5\textwidth}
  \centering
  \includegraphics[width=.9\linewidth]{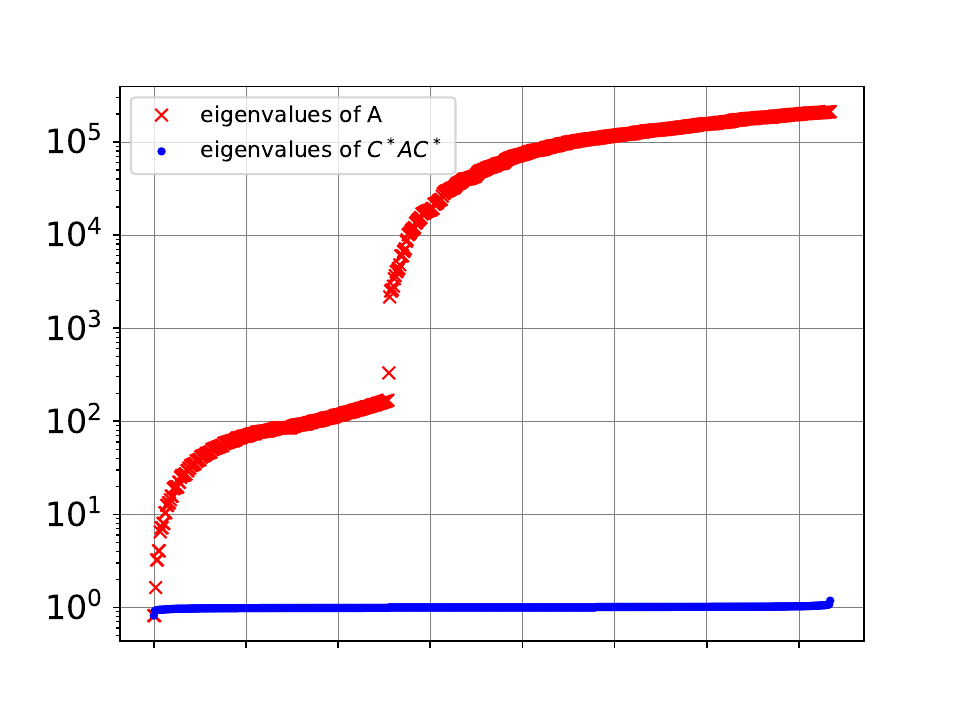}
\end{subfigure}%
\begin{subfigure}{.5\textwidth}
  \centering
  \includegraphics[width=.9\linewidth]{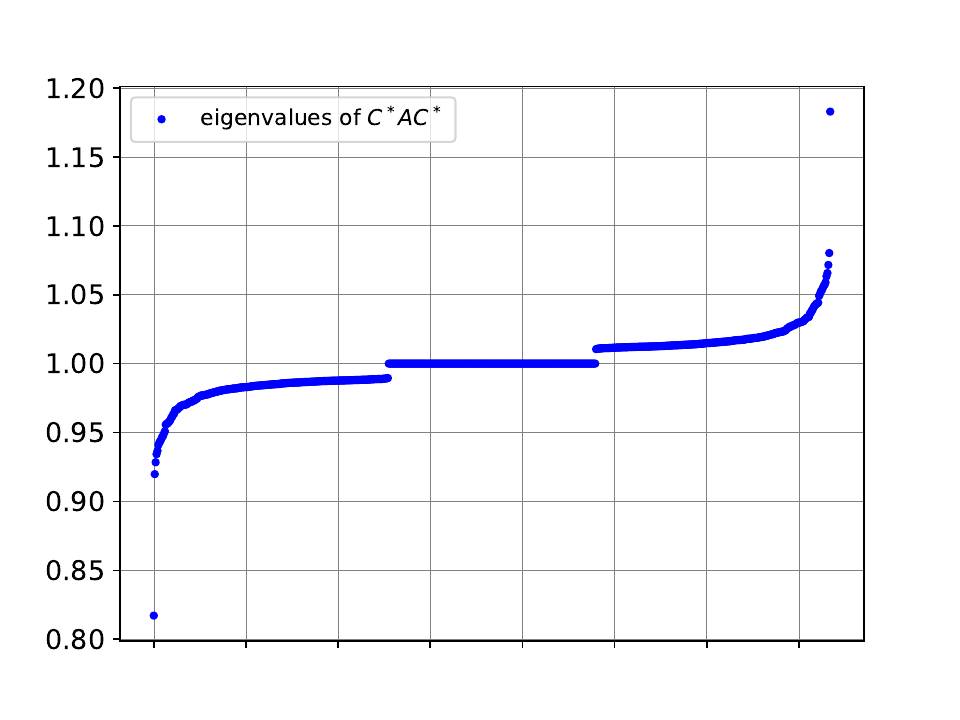}
\end{subfigure}
\caption{Left: Distribution of the eigenvalues of matrices $\Amatrix$ and $\Cmatrix^* \Amatrix \Cmatrix^*$ for $N = M = 16$ in log scale. Right: distribution of the eigenvalues of matrix $\Cmatrix^* \Amatrix \Cmatrix^*$ only. We observe that the eigenvalues of the preconditioned matrix are significantly more clustered.}
\label{fig:eigenvalues-distribution}
\end{figure}

%In Figure~\ref{fig:condition-numbers} we plot the improvement in the conditioning induced by the use of the preconditioner $\Dmatrix$. 
%To do so, we define
%\begin{align*}
%    \rho(\Amatrix) &:= \frac{\sqrt{\kappa(\Amatrix) } - 1}{\sqrt{\kappa(\Amatrix)} + 1}
%\end{align*}
%where $\kappa(\Amatrix)$ is the condition number of $\Amatrix$, namely the ratio of the largest eigenvalue over the smallest strictly positive eigenvalue. The constant $\rho(\Cmatrix^* \Amatrix \Cmatrix^*)$ is defined similarly. The quantities $\rho(\Amatrix)$ and $\rho(\Cmatrix^* A \Cmatrix^*)$ govern the convergence of the CG method without and with the preconditioner $\Dmatrix$, respectively~\cite{shewchuk1994introduction, kelley1995iterative, hayami2018convergence}. More precisely, the residual $r_l$ linearly converges to $0$ at the rate $\rho(\Amatrix)^l$ when no preconditioner is used, and at the rate $\rho(\Cmatrix^* \Amatrix \Cmatrix^*)^l$ when the preconditioner $\Dmatrix$ is used. 
Figure~\ref{fig:condition-numbers} plots both the condition number and the values of $\rho$ for $\Amatrix$ and $\Cmatrix^* A \Cmatrix$, for different values of $N$. We observe that as the dimension increases, the condition number of $\Amatrix$ explodes and the value of $\rho$ is very close to 1. For the matrix $\Cmatrix^* \Amatrix \Cmatrix^*$, the conditioning is much better.

\begin{figure}[ht]

\centering
\begin{subfigure}{.5\textwidth}
  \centering
  \includegraphics[width=.9\linewidth]{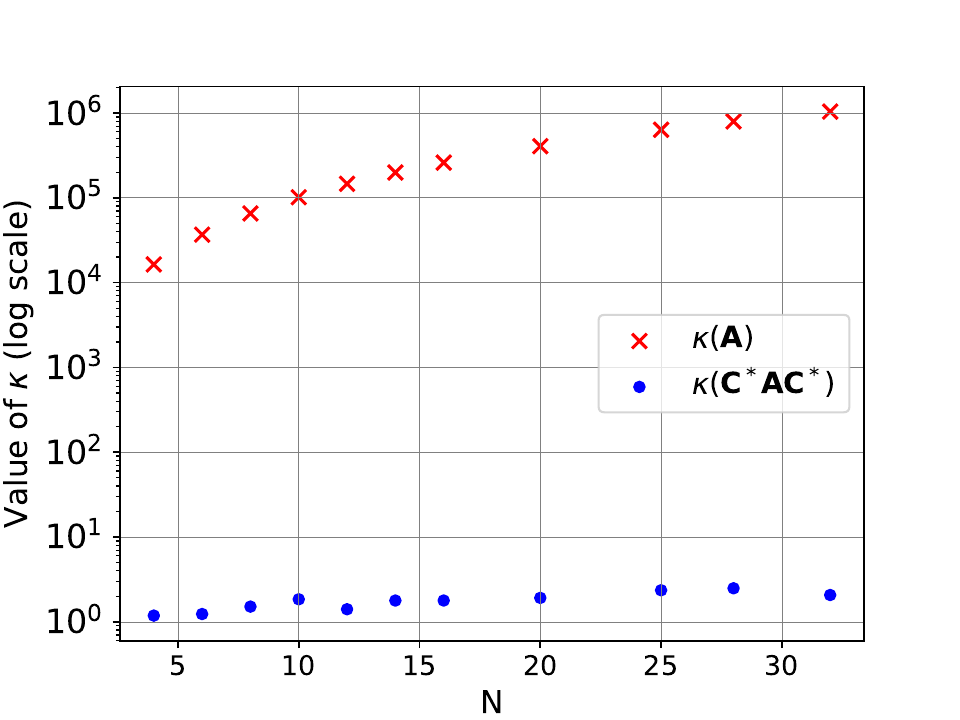}
\end{subfigure}%
\begin{subfigure}{.5\textwidth}
  \centering
  \includegraphics[width=.9\linewidth]{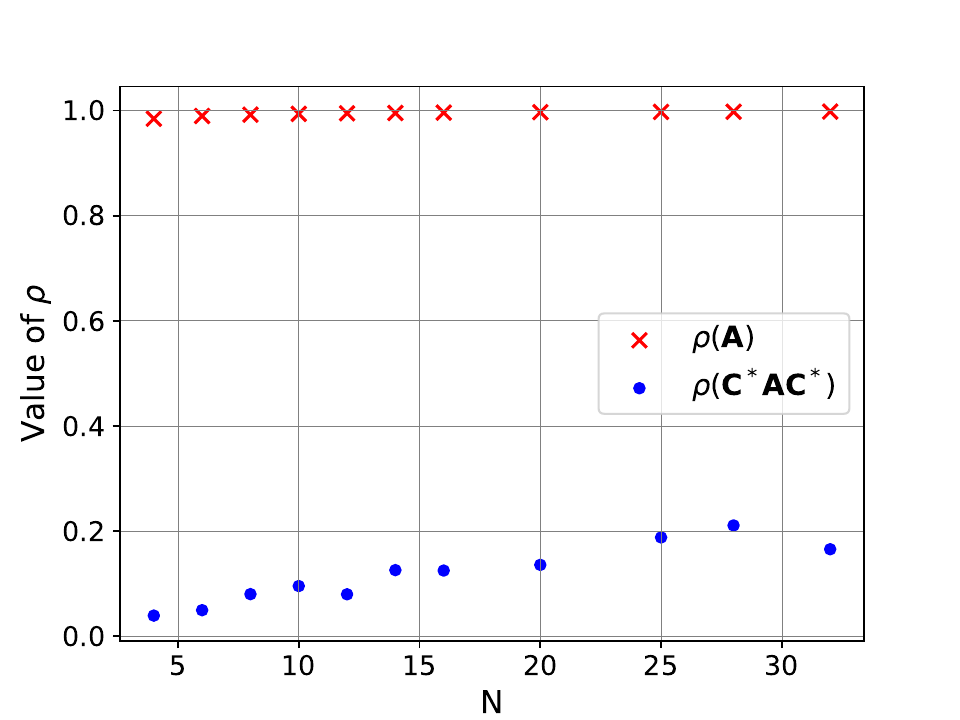}
\end{subfigure}
\caption{Left: Value of the condition numbers of the original and preconditioned linear systems (in semilog scale), for different values of $N$. Right: Value of CG convergence rate $\rho$ for both original and preconditioned linear system, for different values of $N$. As we can see, there is a significant improvement in the value of the condition number for the preconditioned system, and this translates into an improved value of $\rho$.}
\label{fig:condition-numbers}
\end{figure}

\begin{remark}
Note that the values of $N$ used in the above plots are relatively small compared to the ones encountered in practical applications. We could not go to higher values because of numerical bottlenecks. Indeed, computing $\Cmatrix^*$ first involves computing the eigendecomposition of $\Dmatrix$. To get the above plots, one then needs to compute the eigenvalues of the dense matrix $\Cmatrix^* \Amatrix \Cmatrix^*$. Even for $N = M = 100$, the size of the matrices $\Amatrix$, $\Dmatrix$ and $\Cmatrix^*$ is $29800 \times 29800$. This proved to be too slow in practice, and we believe that this section still illustrates well the benefits of our preconditioner.
\end{remark}

\begin{remark}
We are finally able to explain why it is numerically interesting to use quadratic penalties instead of hard constraints in problem~\eqref{prob:soft-constraints}. Indeed, it would be possible to derive the IRLS and CG algorithms of sections~\ref{sec:IRLS} and~\ref{sec:CGM} starting from problem~\eqref{prob:main-matrix-form}. The resulting vectorized linear system to be solved in the IRLS algorithm would then have the form
%\begin{align}
%    \label{eq:linear-system-hard-constraints}
%    \SN^\top (\Cv \hadamard \Cv \hadamard \Wv \odot (\SN\U)) + (\Ch \hadamard \Ch \hadamard \Wh \odot (\U\TM))\TM^\top = \Bmatrix'
%    \end{align}
%where 
%\begin{align}
%    \Bmatrix' = \SN^\top (\Cv \hadamard \Cv \hadamard \Wv \odot \Gv) + (\Ch \hadamard \Ch \hadamard \Wh \odot \Gh) \TM^\top.
%\end{align}
%Assuming for simplicity that the weights are all unity, problem~\eqref{eq:linear-system-hard-constraints} can be vectorized as
\begin{align*}
    \big((\I \kron \SN)^\top \diag{\myvec{\frac{1}{\Wv}}} ( \I \kron \SN) + (\TM \kron \I) \diag{\myvec{\frac{1}{\Wh}}}(\TM \kron \I)^\top\big) u = b',
\end{align*}
with $b' \in \R^n$ (we assume for simplicity that the weights $\Cv$ and $\Ch$ are unit weights). This results in the matrices $\Wv$ and $\Wh$ being `sandwiched' between two matrices, namely $( \I \kron \SN)$ and its transpose, and $(\TM \kron \I)$ and its transpose, respectively. As the IRLS algorithm makes progress, the linear system becomes more and more ill-conditioned, and we were not able to find good preconditioners to speed up the conjugate gradient iterates. In contrast, in matrix $\Amatrix$ defined in~\eqref{eq:matrix-soft-constraints}, the blocks in $\Wv$ and $\Wh$ and the block of the form $(\I \kron \SN^\top \SN) + (\TM\TM^\top \kron \I)$ are separate and form the diagonal, making the preconditioning much easier and efficient. A similar observation was made in~\cite{fornasier2016conjugate}, and we refer the interested reader to this work for more details on the numerical benefits of the quadratic penalties.

\end{remark}

\section{Experiments}
\label{sec:experiments}

We test our algorithm on both synthetic and real topographic data sets. The synthetic data allows us to verify that the output of our algorithm is accurate. For both synthetic and real data, we compare running time and solution quality of our method with the output of SNAPHU~\cite{chen2000network, chen2001two, chen2002phase}. SNAPHU is a widely used open-source software for phase unwrapping. The default algorithm is the statistical-cost, network-flow algorithm, which poses the phase unwrapping as a maximum a posteriori probability problem. SNAPHU also implements solvers for $L^p$-minimization phase unwrapping problem. In particular, the $L^1$-norm problem is solved using a modified network-simplex solver.

Whether SNAPHU solves the statistical-cost problems or $L^p$-norm problems, it is initialized with either a minimum cost flow (MCF) algorithm~\cite{costantini1998novel} or a minimum spanning tree (MST) algorithm~\cite{chen2000network}. It also offers the option to return the solution after only running the initialization. 

Note that MCF is another way of solving the $L^1$-norm problem. As noted in the SNAPHU documentation, the results of MCF and of the SNAPHU $L^1$-norm solver may be different, although in theory both should be $L^1$ optimal.

We shall therefore compare the IRLS algorithm against the five following SNAPHU options: MCF, MST, $L^1$-norm minimization with MCF initialization, $L^1$-norm minimization with MST initialization, statistical-cost with MCF initialization and statistical-cost with MST initialization, the latter being the default SNAPHU option.

IRLS experiments are ran on a Nvidia RTX8000 48 GB GPU, and the SNAPHU experiments on a Intel(R) Xeon(R) CPU E5-2650.

\subsection{Real Data Acquisition and Synthetic Data Generation}
\label{sec:data-details}

We now describe the Sentinel-1 InSAR dataset used in our experiments. For clarity, recall that the main geometric effects present in the interferogram between images $1$ and $2$ are the orbital $o_{12}$ and $\xi_{12}$ topographic components for which their unwrapped phase value can be approximated by \cite{Hanssen2002}:
\begin{equation}
    o_{12} = \dfrac{- 2 m \pi}{\lambda} B^\parallel_{12} \,\, \text{(rad)}, 
    \label{eq:orb_phase}
\end{equation}
\begin{equation}
    \xi_{12} = \dfrac{- 2 m \pi}{\lambda} \dfrac{B^\perp_{12} h}{R_{st}\sin \theta} \,\, \text{(rad)}, 
    \label{eq:topo_phase}
\end{equation}
where $\lambda$ is the radar carrier wavelength, $B^\parallel_{12}$ and $B^\perp_{12}$ correspond to the parallel and orthogonal baseline respectively (related to the separation of the two orbit passes in the direction parallel/orthogonal to the line of sight), $R_{st}$ is the satellite-target distance also known as the range, $\theta$ is the incidence angle and $m$ is equal to $2$ to account for the two-way wave propagation in Sentinel-1. A real Sentinel-1 interferogram also contains contributions related to the deformation between the two dates and the atmospheric propagation delays and is also impacted by phase noise and wrapping.

For the synthetic data generation, we are only interested in the topographic component $\xi_{12}$ because of the simplicity of the model given in~\eqref{eq:topo_phase}. Therefore, we use ten pairs of Sentinel-1 Synthetic Aperture Radar (SAR) images covering different locations in the world, chosen to exhibit variations in topography. To maximize the effect of topography $\xi_{12}$, we choose image pairs with large $B^\perp_{12}$, and we also minimize the potential deformation contributions by taking dates that are temporally close, as shown in Table \ref{tab:data_description}. We then adopt the standard simulation procedure of a differential InSAR processing chain to compute $o_{12}$ and $\xi_{12}$. In particular, we use the metadata of the two acquisitions in the image pair, along with their precise orbit auxiliary metadata, to obtain their respective camera models. The camera model allows us to compute the 2D position in the image of a 3D point. We use the camera model and the SRTM30~\cite{farr2007shuttle} Digital Elevation Model (DEM) to estimate $B^\parallel_{12}$, $B^\perp_{12}$, $h$, and $\theta$ for all pixels. Values of $\lambda$ and $R_{st}$ can be obtained by simple computations from the metadata, and~\eqref{eq:orb_phase} and~\eqref{eq:topo_phase} are used to get $o_{12}$ and $\xi_{12}$. Kayrros' toolbox for SAR data ("EOS-SAR") was used to perform the different computations. Note that when transforming the DEM into radar coordinates for estimating $h$, we adopt a mesh interpolation backgeocoding approach~\cite{Linde-Cerezo2021}. This induces some small interpolation artifacts in the form of a triangular mesh. Nevertheless, it is interesting to use $\xi_{12}$ for the synthetic data because it contains discontinuities related to terrain distortions due to the SAR acquisition geometry. In fact, besides phase noise, distortions such as layover and foreshortening are often cited as challenges for phase unwrapping~\cite{Hanssen2002}. 

On the other hand, after image alignment and burst stitching with the method described in \cite{akiki2022improved}, we compute the real interferograms compensated from the orbital component $o_{12}$ with: 
\begin{equation}
    z_{12} = z_1 \cdot z_2^* \cdot e^{-j o_{12}}, 
\end{equation}
where $z_1$ and $z_2$ correspond to each complex SAR image. The interferometric phase is simply obtained by 
\begin{equation}
    \phi_{12} = \operatorname{arg}(z_{12}), 
\end{equation}
where $\operatorname{arg}(.)$ denotes the argument of the complex number. As previously explained, our choice of images is such that $\phi_{12}$ is dominated by $\xi_{12}$. Therefore, $\phi_{12}$ is a real wrapped noisy phase containing fringes mainly related to topography. Since it is standard practice to denoise the interferometric phase prior to unwrapping  \cite{Hanssen2002, Braun2021}, we apply a Goldstein phase filter \cite{Goldstein1998} on the complex interferogram $z_{12}$, and the real denoised phase is given by 
\begin{equation}
    \widehat{\phi_{12}} = \operatorname{arg}(\widehat{z_{12}}), 
\end{equation}
where $\widehat{z_{12}}$ is the Goldstein filtered intereferogram. The Goldstein filter is a patch-based method, where each patch is denoised in the Fourier domain via: 
\begin{equation}
    Z'(u,v) = Z(u,v) \left\{\left(\mathbf{1}_{N_f} *|Z| \right)(u,v)\right\}^{\alpha}, 
\end{equation}
where $Z(u,v)$ and $Z'(u,v)$ correspond to the input and output patch Fourier transforms respectively, $\mathbf{1}_{N_f}$ is a uniform filter of size $N_f$ ($*$ is the convolution operation), and $\alpha$ is a factor between $0$ and $1$ that controls the amount of filtering. The patches are taken at a fixed step $s$, with a patch size equal to $4 \times s$, yielding an overlap of $75\%$. After the inverse Fourier transform of $Z'(u,v)$ is computed, the patches are recombined using a linear taper in the x and y dimensions. For our dataset, we set $\alpha = 1$, a step $s=16$ pixels, and a uniform filter size $N_f = 5$.  

The previously described process was repeated twice to obtain two datasets with different image sizes at full Sentinel-1 resolution. In both cases, we use the camera model to convert the centroid coordinates in Table \ref{tab:data_description} to the SAR image coordinates. For the first dataset, we compute crops of size $2048\times2048$ pixels centered on this location. For the second dataset, we compute the crops by stitching the full burst of the centroid with its previous and next burst in the same swath. This yields images of size about $ 4000 \times 20000$ pixels.

\begin{table}[t]
\centering
\begin{tabular}{l|c|c|c|c|c|c}
\toprule
Location & lon & lat & date 1 & date 2 & relorb & $B^\perp$(m) \\
\midrule
Arz & 36.5085 & 34.1601 & 20220615 & 20220627 & 21 & 100 \\
Etna & 14.6594 & 37.7404 & 20220411 & 20220505 & 124 & -138 \\
El Capitan & -120.0154 & 37.7357 & 20210710 & 20210728 & 144 & -99 \\
Kilimanjaro & 37.1166 & -2.9983 & 20180809 & 20180821 & 79 & 129 \\
Mount Sinai & 33.8923 & 28.5510 & 20230303 & 20230315 & 160 & 256 \\
Korab & 20.7207 & 41.8401 & 20220824 & 20220905 & 175 & -271 \\
Nevada & -119.2729 & 41.2154 & 20220518 & 20220530 & 144 & 156 \\
Zeil & 132.1604 & -23.3499 & 20211209 & 20211221 & 2 & -144 \\
Wulonggou & 96.0567 & 36.1646 & 20220824 & 20220905 & 172 & -270 \\
Warjan & 65.0716 & 32.4197 & 20230118 & 20230130 & 42 & -326 \\
\bottomrule
\end{tabular}
\caption{Dataset definition: for each location given by its central coordinates, we process the images at the two dates for a given relative orbit. Contains modified Copernicus Sentinel data.}
\label{tab:data_description}
\end{table}
\subsection{Weight computation}
\label{sec:weight-computation}

The weight matrices $\Cv$ and $\Ch$ are critical for the quality of the unwrapping. In classical SAR interferometry, the phase interferogram is usually provided along with the amplitudes $A_1$ and $A_2$ of the two acquired images, and a coherence map $C$. In our experiments, we use the statistical weights generated by SNAPHU, as they show very good practical performance and their computation is cheap compared to the running time of the algorithm. We therefore include the SNAPHU software in our distribution, and we slightly modify the original code to add the option of only computing the weights. For a detailed explanation of how those weights are generated, see~\cite{chen2001two, chen2001statistical}

For the experiments with real images, the weights are calculated using the interferogram, amplitudes $A_1$ and $A_2$, and the coherence map $C$. For the experiments with simulated images, only the interferogram is provided.

\subsection{Number of CG iterations and stopping criterion}
\label{sec:cg-update-strategy}

The conjugate gradient method presented in Section~\ref{sec:CGM} to solve the linear system~\eqref{eq:linear-system} requires an explicit maximum number of iterations. We seek to balance running time of the IRLS algorithm~\ref{algo-IRLS} with the quality of the output image. In particular, in early iterations of IRLS, it is often unnecessary to solve the linear system to a high precision to make significant progress on the value of the objective function, while higher precision is helpful in later iterations to refine image quality. This impacts the number of CG iterations at each IRLS step. One also needs to decide when to stop the IRLS algorithm as a whole. We propose a heuristic that aims at striking a good balance.

Let $m_{CG}(k)$ be the maximum number of iterations of CG allowed at iteration $k$ of the IRLS algorithm. We start by setting  $m_{CG}(0)$ to some predefined value.
For $k\geq 1$, after each updates of the weight matrices $\Wvk{k}, \Whk{k}$, we compute the relative improvement
\begin{align*}
    \Delta_{k-1, k} = \frac{H_\delta(\U^k, \Vvk{k}, \Vhk{k}, \Wvk{k-1}, \Whk{k-1}) - H_\delta(\U^k, \Vvk{k}, \Vhk{k}, \Wvk{k}, \Whk{k})}{H_\delta(\U^k, \Vvk{k}, \Vhk{k}, \Wvk{k-1}, \Whk{k-1})},
\end{align*}
where $H_\delta$ is as defined in Theorem~\ref{thm:CV-rate}. We then increase the maximum number of CG iterations based on whether or not we deem the relative improvement $\Delta_{k-1, k}$ significant enough, comparing it to a user-defined tolerance $\epsilon_{tol}$. If it is, then enough improvement was made by CG with the current number of maximum iterations, so we do not update $m_{CG}(k)$. If it is not, two cases arise. If the maximum number of CG iterations was already increased at the previous iteration, we consider that significant progress can no longer be made in the IRLS algorithm, so we stop and return $\U$. If the maximum number of CG iterations was not increased in the previous iteration, then it is possible that more CG iteration would lead to more progress, so we scale up $m_{CG}$ by a constant $\alpha > 1$. We summarize this as follows:
\begin{enumerate}
    \item If $\Delta_{k-1, k} > \epsilon_{tol}$, set $m_{CG}(k) = m_{CG}(k-1)$.
    \item If $\Delta_{k-1, k} \leq \epsilon_{tol}$ and $m_{CG}(k-1) \not = m_{CG}(k-2)$, stop and return $\U^k$.
    \item If $\Delta_{k-1, k} \leq \epsilon_{tol}$ and $m_{CG}(k-1) = m_{CG}(k-2)$, set $m_{CG}(k) = \alpha m_{CG}(k-1) $.
\end{enumerate}
In the following experiments we have set $m_{CG}(0) = 5,~\epsilon_{tol} = 10^{-3},~\alpha = 1.7.$

\subsection{Code details}
The full code can be downloaded from \url{https://github.com/bpauld/PhaseUnwrapping/tree/main}. In this section we give a quick overview of the main parts of our code. The file \textbf{final\_script.py} contains essentially the same steps. First we need a few imports.
\begin{lstlisting}
import numpy as np
from python_code.parameters import ModelParameters
from python_code.parameters import IrlsParameters
from python_code.unwrap import unwrap
\end{lstlisting}

\paragraph{Loading data}
Assuming the image to be unwrapped is contained in a .npy file called `X.npy', simply load it
\begin{lstlisting}
path_X = "X.npy" # replace with appropriate path
X = np.load(path_X) 
\end{lstlisting}

\paragraph{Defining weighting strategy}
The next step consists in defining $\Cv$ and $\Ch$. Several options are available. The first one is to provide user-defined weights.
\begin{lstlisting}
path_Cv = "Cv.npy" # replace with appropriate path
path_Ch = "Ch.npy" # replace with appropriate path
Cv = np.load(path_Cv)
Ch = np.load(path_Ch)
\end{lstlisting}
Alternatively, if the user does not wish to supply precomputed weights, the values of the weights should be set to `None', and the weighting strategy should be specified, as we will explain next.
\begin{lstlisting}
Cv, Ch = None, None
\end{lstlisting}
One can then decide to use constant unity weights via two equivalent ways.
\begin{lstlisting}
weighting_strategy = "uniform" 
weighting_strategy = None # last two lines are equivalent
\end{lstlisting}
Alternatively, one can use weights generated from SNAPHU, as mentioned in Section~\ref{sec:weight-computation}. The quality of the weights can be further improved if two amplitude files and a correlation file are provided, although this is not mandatory.
\begin{lstlisting}
weighting_strategy = "snaphu_weights"
path_amp1 = "amp1.npy" # replace with appropriate path
path_amp2 = "amp2.npy" # replace with appropriate path
path_corr = "corrfile.npy" # replace with appropriate path
amp1 = np.load(path_amp1)
amp2 = np.load(path_amp2)
corr = np.load(path_corrfile)
\end{lstlisting}
Note that if $\Cv$ and $\Ch$ are not set to \textbf{None}, their value will prevail over any weighting strategy specified and the algorithm will run with the supplied $\Cv$ and $\Ch$.

\paragraph{Defining model parameters}
The parameters of the model, which correspond to the value of $\tau$ as introduced in~\eqref{prob:soft-constraints} and the value of $\delta$ as introduced in~\eqref{eq:F_delta}, then need to be defined.
\begin{lstlisting}
model_params = ModelParameters()
model_params.tau = 1e-2
model_params.delta = 1e-6
\end{lstlisting}

\paragraph{Defining IRLS parameters}
Several parameters are necessary for the IRLS algorithm. Those are contained in the \textbf{IrlsParameters} class. The most important are related to the heuristics for updating the number of CG iterations described in section~\ref{sec:cg-update-strategy}.
\begin{lstlisting}
irls_params = IrlsParameters()
irls_params.max_iter_CG_strategy = "heuristics"
irls_params.max_iter_CG_start = 5
irls_params.rel_improvement_tol = 1e-3
irls_params.increase_CG_iteration_factor = 1.7
\end{lstlisting}
Other parameters can optionally be set, such as maximum number of IRLS iterations, maximum number of inner CG iterations, tolerance on the residue for stopping CG.

\paragraph{Unwrapping the image}
Finally, the unwrapping is done by calling the corresponding function.
\begin{lstlisting}
U, Vv, Vh = unwrap(X, 
            model_params=model_params,
            irls_params=irls_params,
            amp1=amp1,
            amp2=amp2,
            corr=corr,
            weighting_strategy=weighting_strategy,
            Cv=Cv, Ch=Ch)
\end{lstlisting}

\subsection{Results}

Recall that our model is invariant by a constant shift. To compute the error of our method, we therefore look for the shift that minimizes the norm between the original image and the shifted output. Namely, if the objective is an unwrapped image $\Xmatrix_u \in \R^{N \times M}$, then for an output $\U \in \R^{N \times M}$ produced by our algorithm, we compute the error $E(\U, \Xmatrix_u)$ as:
\begin{align*}
    E(\U, \Xmatrix_u) = \Xmatrix_u - (\U + \alpha \ones_{N \times M})
\end{align*}
where 
\begin{align*}
    \alpha = \text{argmin}_{\alpha \in \R} \left\{\norm{\Xmatrix_u - (\U +\alpha \ones_{N \times M}) }^2 \right\} = \frac{\sum_{\substack{1 \leq i \leq N \\ 1 \leq j \leq M }} (\Xmatrix_u - \U)_{i, j}}{ N M}.
\end{align*}

We start by exploring the performance of our proposed algorithm on simulated images such as the ones described in Section~\ref{sec:data-details}. This allows us to ensure that our method outputs an image which is consistent with the original simulated image, used as the ground truth to compute the error.

\begin{figure}[ht]
    \centering
    \includegraphics[width=0.33\linewidth]{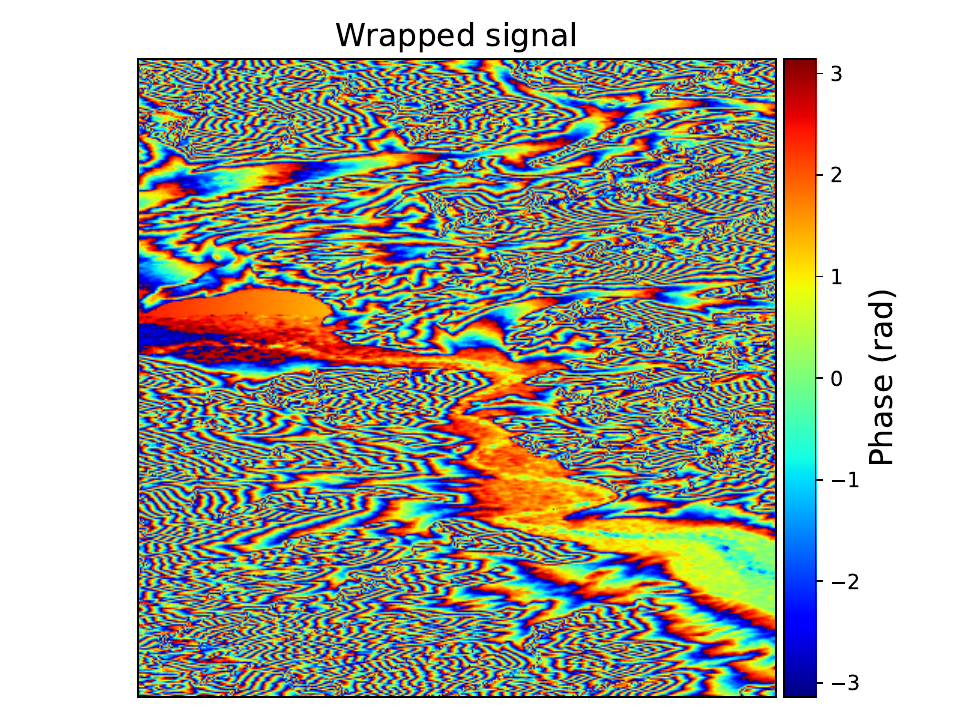}\hfil
    \includegraphics[width=0.33\linewidth]{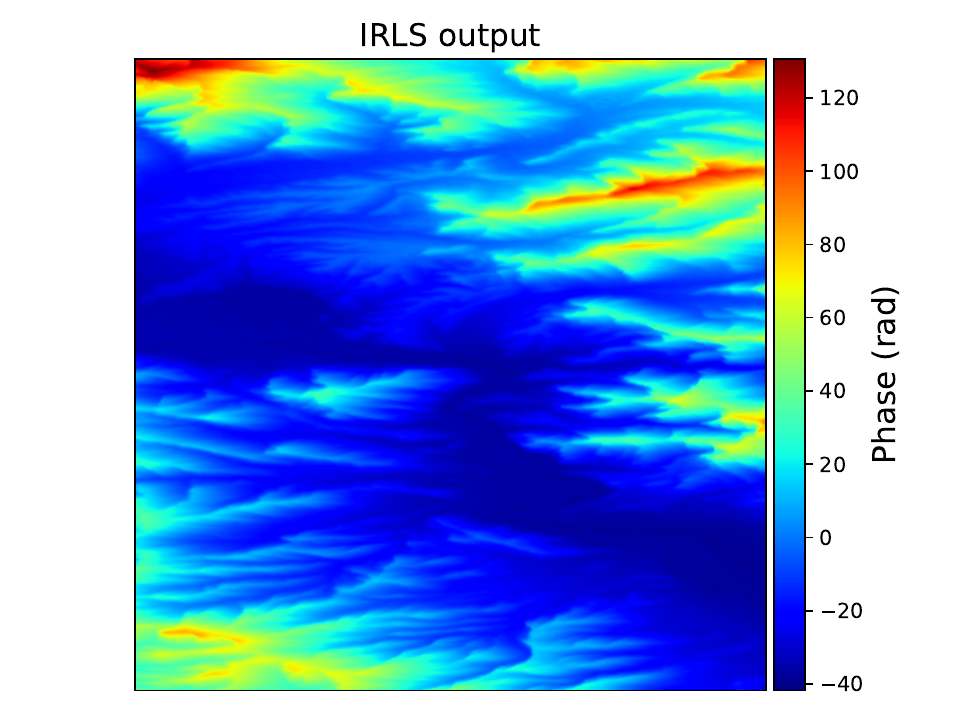}\hfil
    \includegraphics[width=0.33\linewidth]{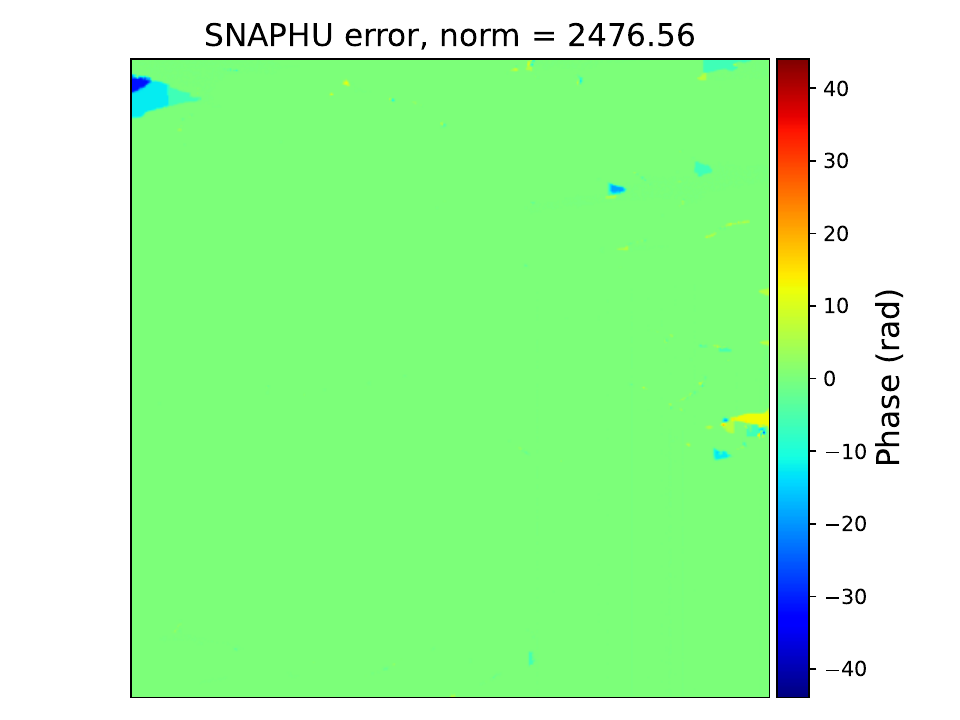}\par\medskip
    \includegraphics[width=0.33\linewidth]{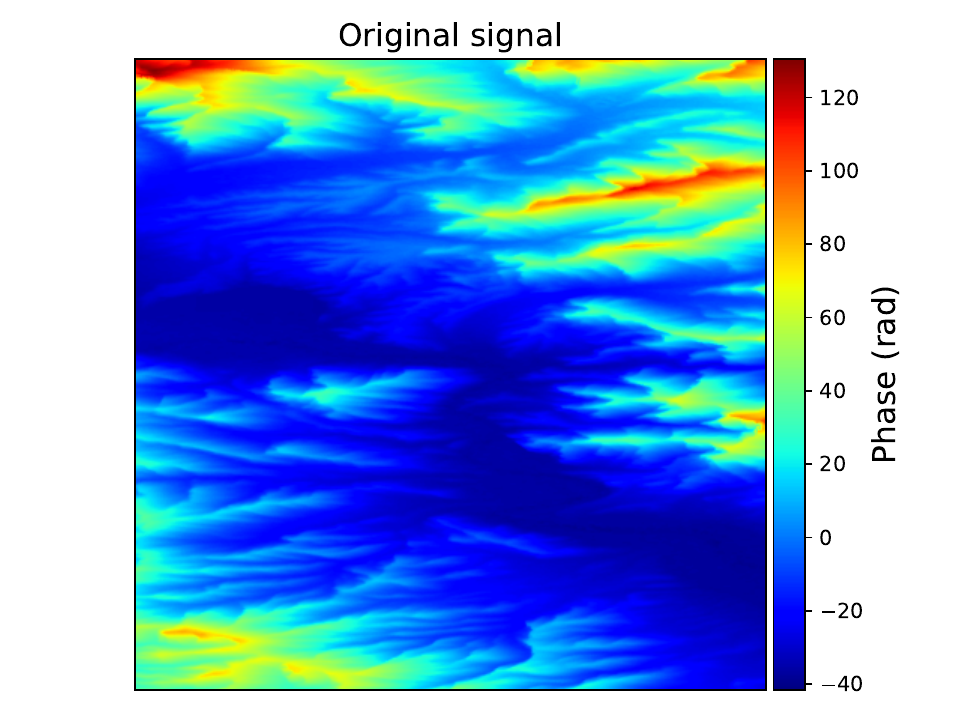}\hfil
    \includegraphics[width=0.33\linewidth]{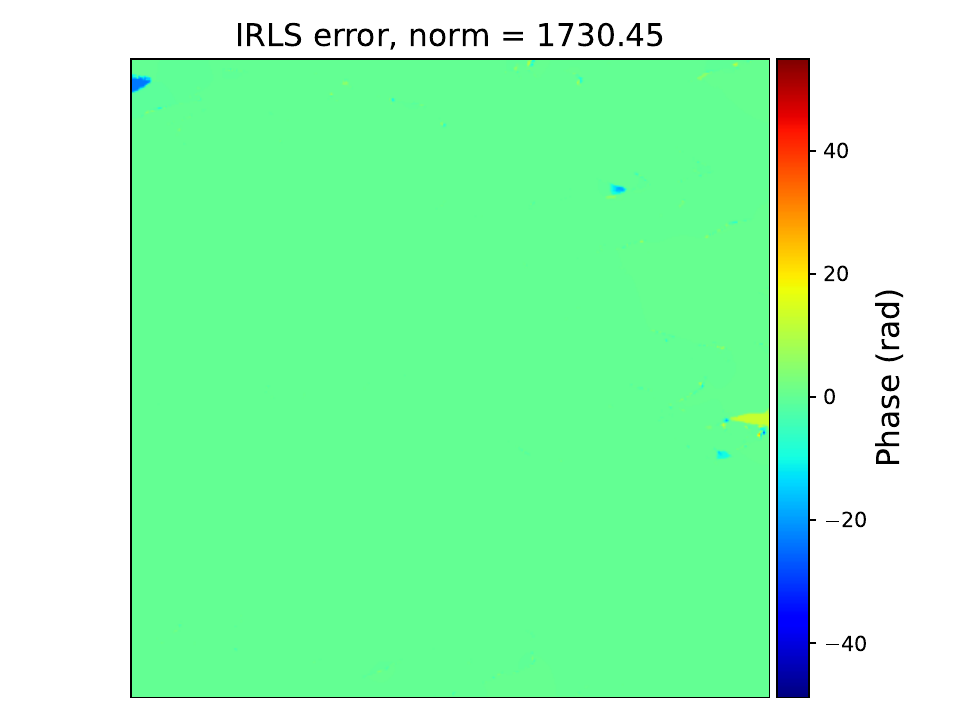}\hfil
    \includegraphics[width=0.33\linewidth]{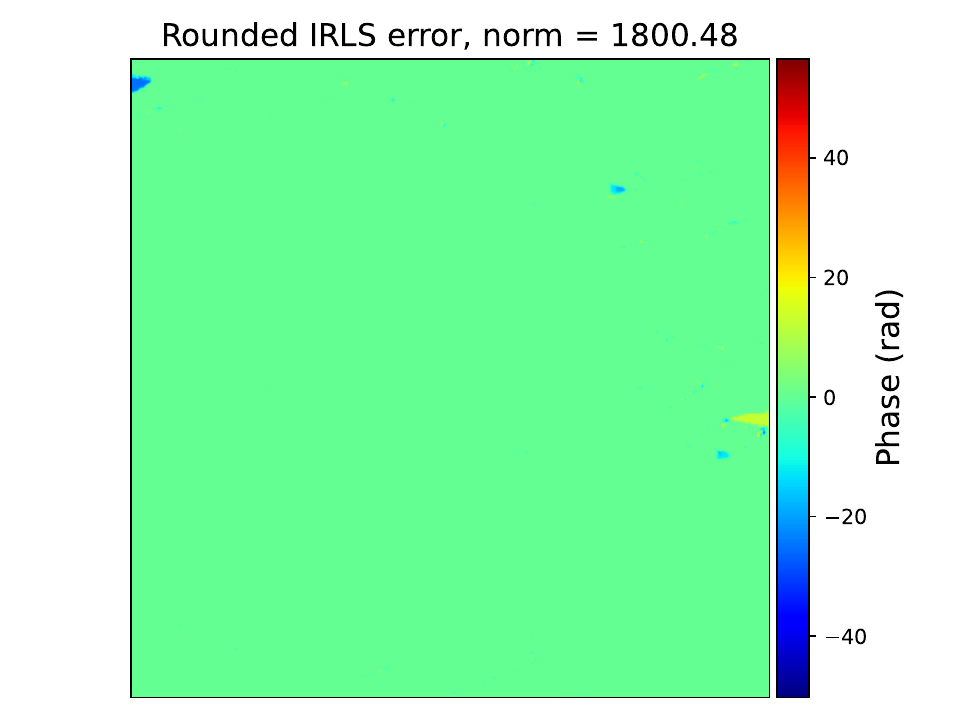}
    \caption{Comparison of the output of SNAPHU and IRLS on Warjan (Afghanistan) simulated image. Image size is $2048 \times 2048$.}
    \label{fig:noiseless-warjan-afghan}
\end{figure}

%\begin{figure}
%\begin{minipage}[c]{0.3\textwidth}
%    \caption{
%       Output and error of \textbf{rounded IRLS} on the same input image as in Figure~\ref{fig:noiseless-warjan-afghan}.
%    } \label{fig:03-03}
%  \end{minipage}\hfill 
%  \begin{minipage}[c]{0.3\linewidth}
%    \includegraphics[width=\textwidth]{}
%  \end{minipage}
%  \begin{minipage}[c]{0.3\linewidth}
%    \includegraphics[width=\textwidth]{figures/plots/warjan_afghanistan/noiseless_v1_rounded_IRLS_error.pdf}
%  \end{minipage}
%\end{figure}

%\begin{figure}
%    \includegraphics[scale=0.3]{figures/plots/warjan_afghanistan_noiseless_v1.pdf}
%    \caption{Comparison of the output of Snaphu, IRLS and rounded IRLS on warjan afghanistan simulated image. Image size is $2048 \times 2048$.}
%    \label{fig:noiseless-warjan-afghan}
%\end{figure}

Figure~\ref{fig:noiseless-warjan-afghan} shows the output of our method for a specific location in Afghanistan. We observe that our method is able to reconstruct the original image almost correctly, and that the error made is comparable to the error made by the default SNAPHU method (MST + SC). We also explore the effect of rounding the IRLS output to enforce the output to be $2\pi$-congruent to the input image. We call this output \textit{rounded IRLS} in the plots. We observe that rounding does not seem to have significant effect on the performance of our method. Similar observations for different locations can be found in Appendix~\ref{sec:additional-experiments-simulated}.

On real images, Figure~\ref{fig:real-goldstein-warjan-afghan} shows that our algorithm is similar to the SNAPHU outputs. In many cases, our solution is closest to the $L^1$-SNAPHU solution. Moreover, we observe that the MST solution yields many artefacts. Similar observations for different locations can be found in Appendix~\ref{sec:additional-experiments-real}.

\begin{figure}[ht]
    \centering
    \includegraphics[width=0.25\linewidth]{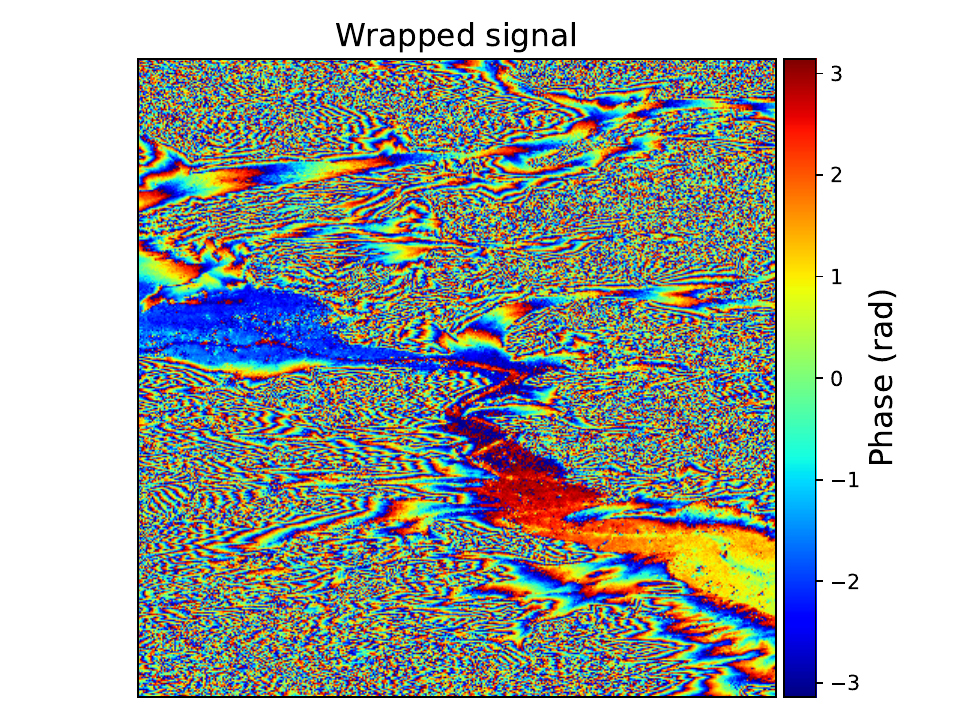}\hfil
    \includegraphics[width=0.25\linewidth]{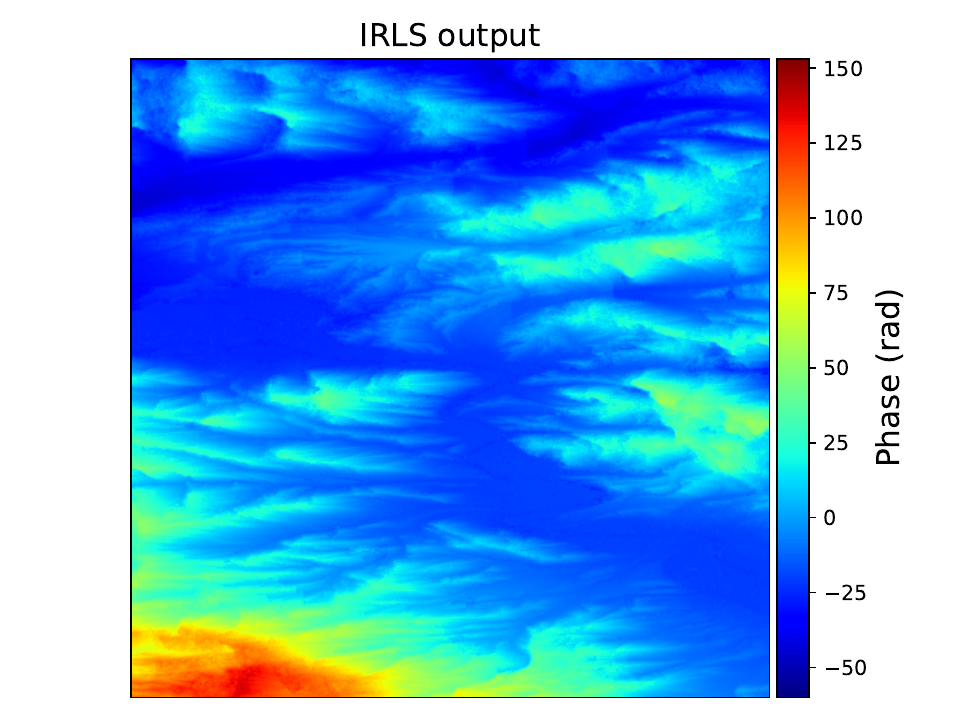}\hfil
    \includegraphics[width=0.25\linewidth]{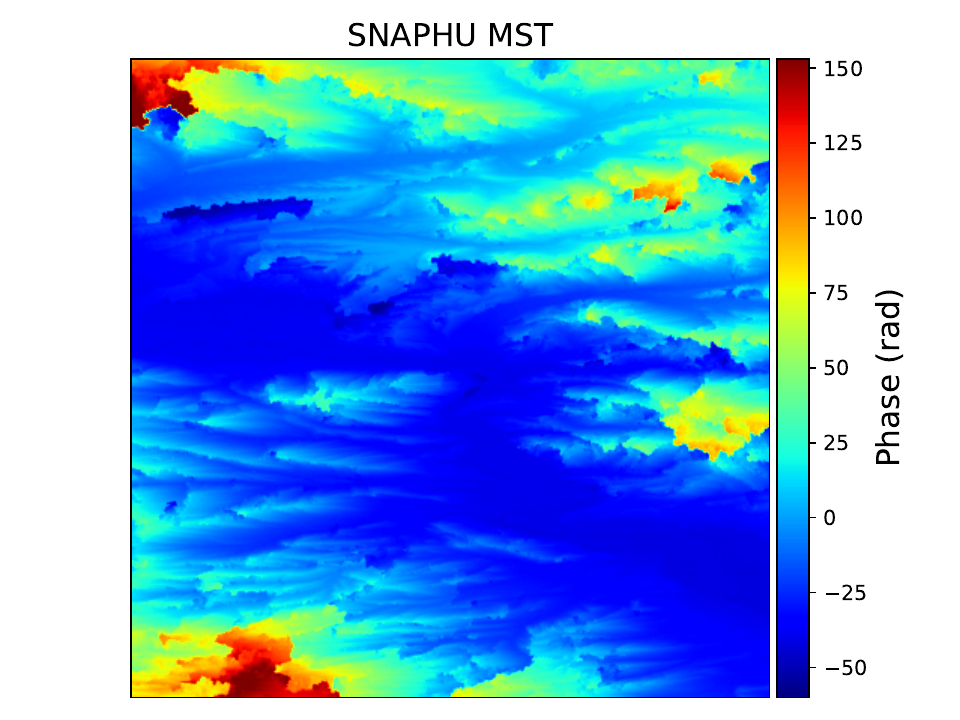}\hfil
    \includegraphics[width=0.25\linewidth]{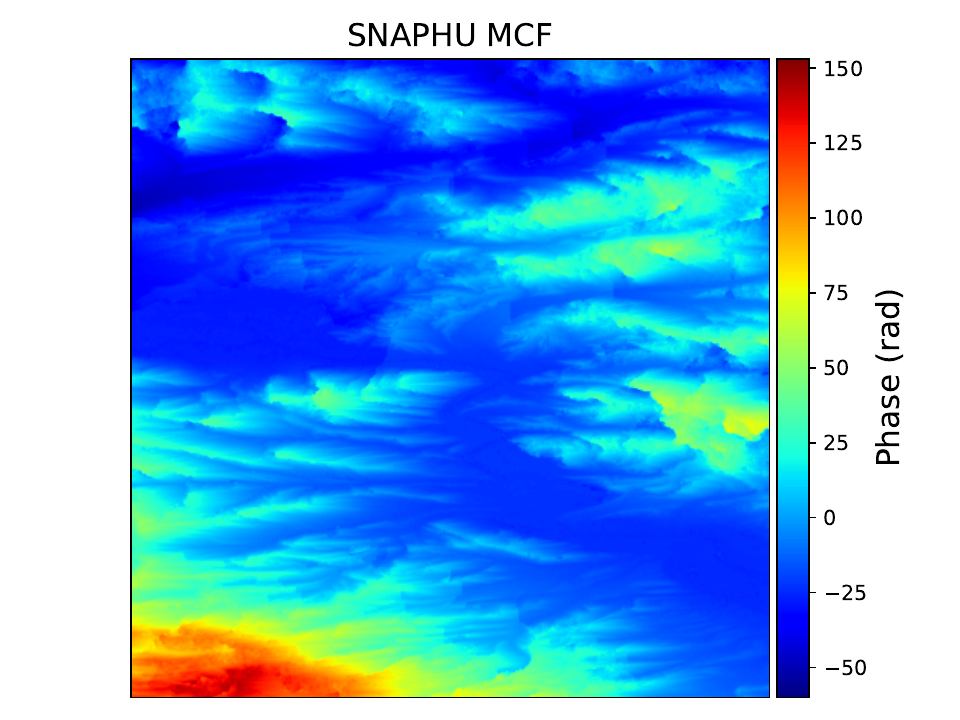}
    \par\medskip
    \includegraphics[width=0.25\linewidth]{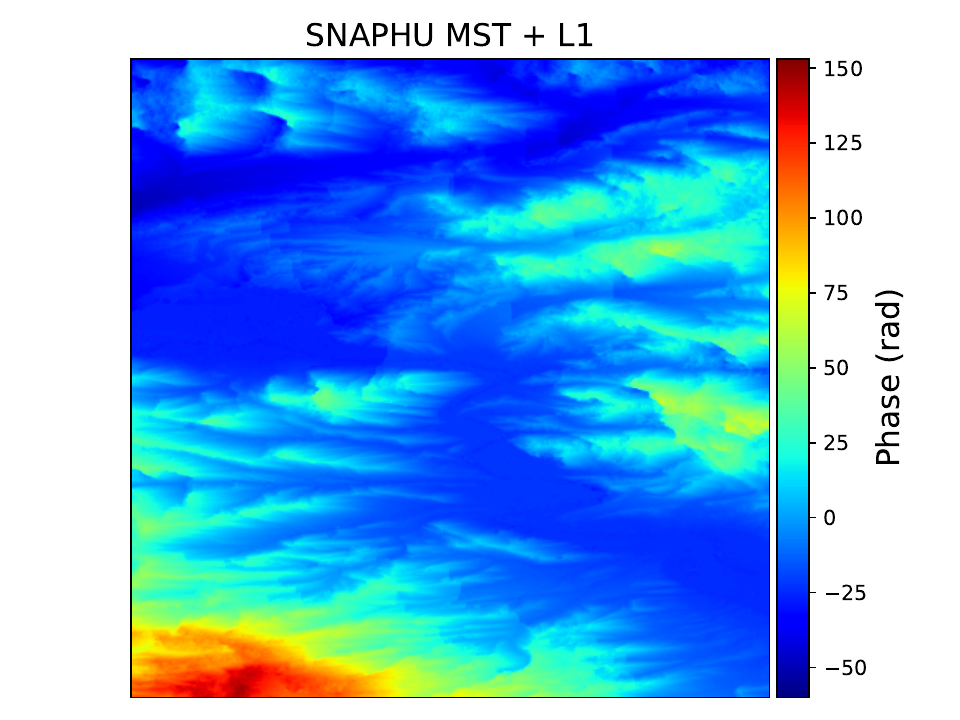}\hfil
    \includegraphics[width=0.25\linewidth]{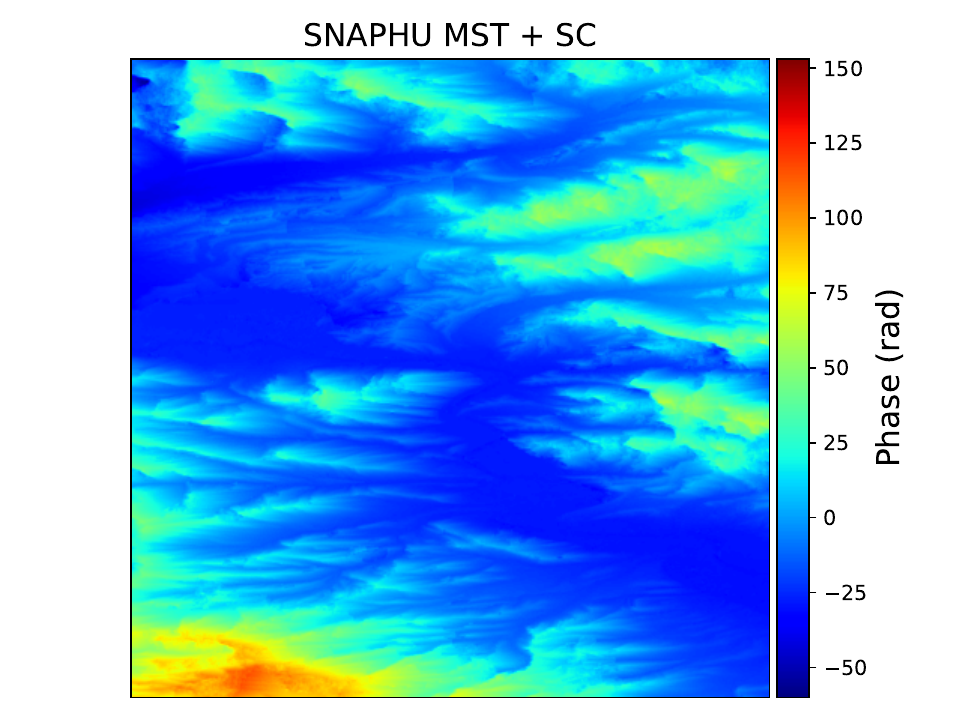}\hfil
    \includegraphics[width=0.25\linewidth]{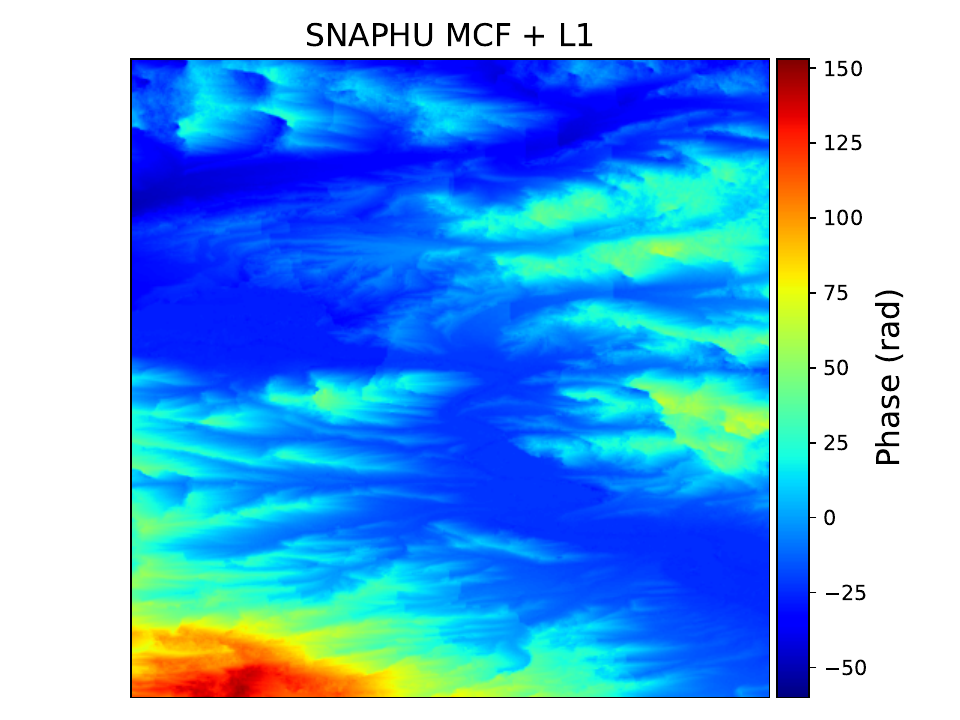}\hfil
    \includegraphics[width=0.25\linewidth]{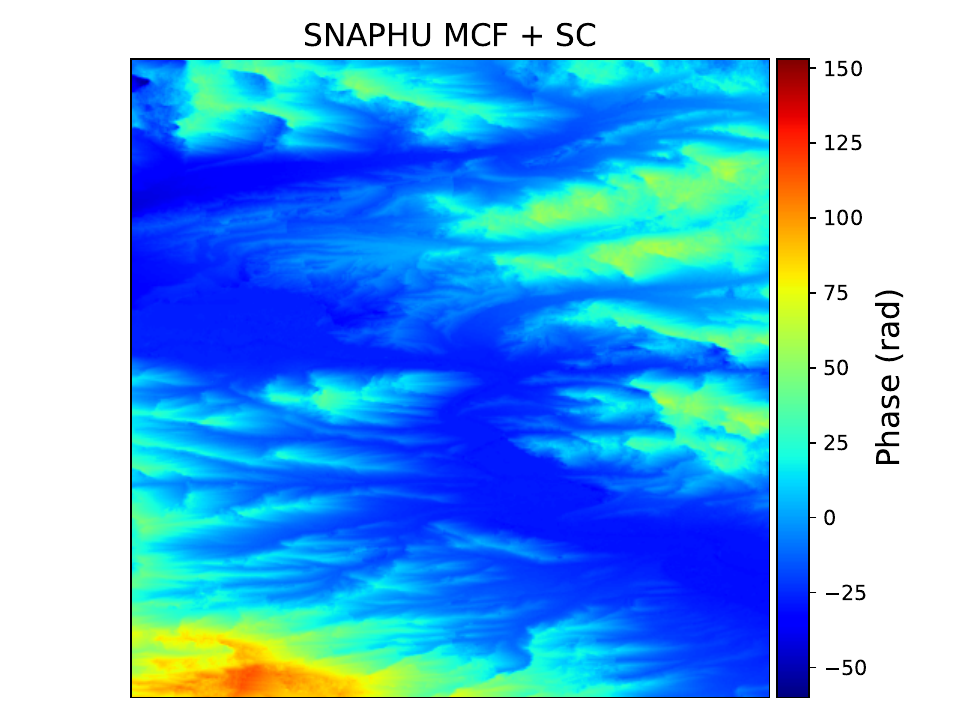}
    \caption{Comparison of the output of IRLS and of the different SNAPHU methods on Warjan (Afghanistan) image after Goldstein filtering. Image size is $2048 \times 2048$.}
    \label{fig:real-goldstein-warjan-afghan}
\end{figure}
%\begin{figure}
%    \includegraphics[scale=0.3]{figures/plots/warjan_afghanistan_real_goldstein_v2.pdf}
%    \caption{Comparison of the output of IRLS and of the different SNAPHU methods on Warjan (Afghanistan) image after Goldstein filtering. Image size is $2048 \times 2048$.}
%    \label{fig:real-goldstein-warjan-afghan}
%\end{figure}

We now turn our attention to the running time of our method in comparison with the different methods implemented in SNAPHU in Table~\ref{table:running-time-simulated} for simulated images and in Table~\ref{table:running-time-real-goldstein} for real images. We see that our algorithm significantly outperforms all other methods, except for the MST algorithm. However, MST yields poor-quality results on real images, and thus cannot be used as a full phase unwrapping method in practice, but only as an initialization procedure.

\begin{table}[ht]
\centering
\caption{Comparison of the running time (in seconds) of different phase unwrapping algorithms on {\bf simulated} $2048 \times 2048$ images. Our proposed method (IRLS) and the standard SNAPHU method (MST + SC) are in bold.}\label{table:running-time-simulated}
\csvreader[tabular = l|c|c|c|c|c|c|c, 
table head = \hline Location & \textbf{IRLS} & MST & MCF & MST + L1 & \textbf{MST + SC} & MCF + L1  & MCF + SC\\\hline,
late after last line=\\\hline
]{figures/durations/durations_noiseless_v1.csv}{}{%
\ifnum\thecsvrow=11 \hline\fi
\csvcoli & \textbf{\csvcolii} & \csvcoliii &\csvcoliv &\csvcolv& \textbf{\csvcolvi} & \csvcolvii &\csvcolviii}
\end{table}

\begin{table}[ht]
\centering
\caption{Comparison of the running time (in seconds) of different phase unwrapping algorithms on {\bf real} $2048 \times 2048$ images after Goldstein filtering. Our proposed method (IRLS) and the standard SNAPHU method (MST + SC) are in bold.}\label{table:running-time-real-goldstein}
\csvreader[tabular = l|c|c|c|c|c|c|c, table head = \hline Location & \textbf{IRLS} & MST & MCF & MST + L1 & \textbf{MST + SC} & MCF + L1  & MCF + SC\\\hline,
late after last line=\\\hline
]{figures/durations/durations_real_goldstein_v2.csv}{}{%
\ifnum\thecsvrow=11 \hline\fi
\csvcoli & \textbf{\csvcolii} & \csvcoliii &\csvcoliv &\csvcolv& \textbf{\csvcolvi} & \csvcolvii &\csvcolviii}
\end{table}

To end this section, we study how the running time scales with the size of the image, for our algorithm and SNAPHU (MST + SC). We unwrap images of size $4000 \times M$ for different values of $M$ ranging from $2000$ to $16000$. We do so for both simulated images and Goldstein filtered real images. We use 10 simulated images and 4 real images. We only use 4 locations for real images because for the remaining ones, it was not possible to find large areas of high coherence, making the unwrapping impossible. Results are displayed in Figure~\ref{fig:scaling}. We not only observe a significant speedup in computing time, but also that our method scales better with the size of problem than SNAPHU.

\begin{figure}[ht]
    \centering
    \includegraphics[width=0.5\linewidth]{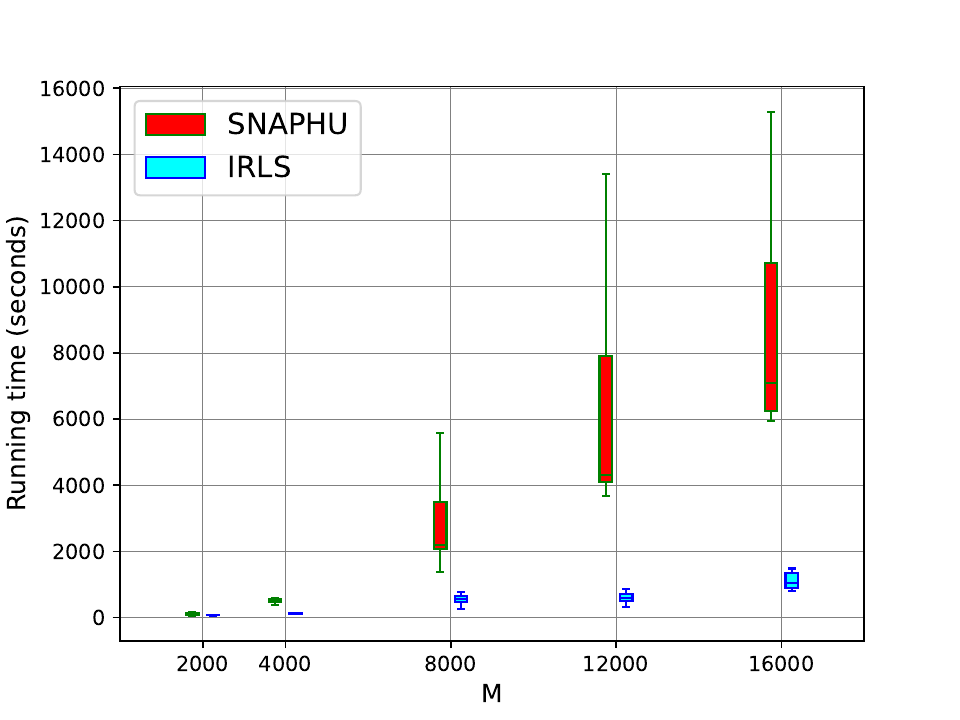}\hfil
    \includegraphics[width=0.5\linewidth]{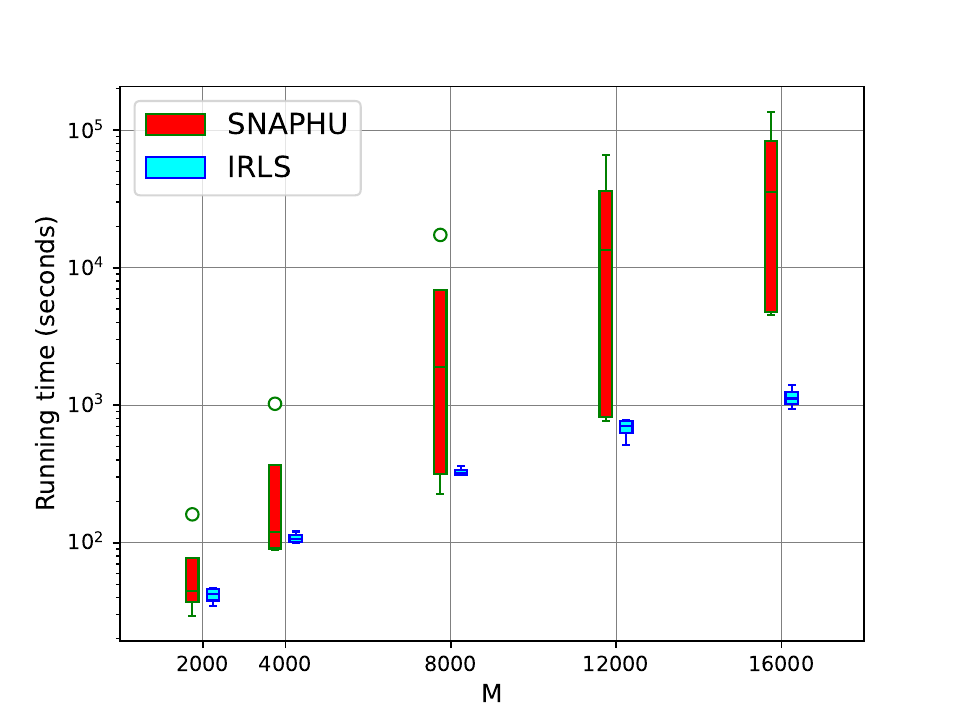}
    \caption{Box plot of the running times of IRLS and SNAPHU (MST + SC) on images of size $4000 \times M$ for different values of $M$. Left: simulated images. Right: Goldstein filtered real images (log scale).}
\label{fig:scaling}
\end{figure}

%\begin{figure}[h]

%\centering

%\begin{subfigure}{.5\textwidth}
%  \centering
%  \includegraphics[width=.9\linewidth]{figures/scaling/noiseless_scaling_boxplot.pdf}
%\end{subfigure}%
%\begin{subfigure}{.5\textwidth}
%  \centering
%  \includegraphics[width=.9\linewidth]{figures/scaling/real_goldstein_scaling_boxplot_log.pdf}
%\end{subfigure}
%\caption{Box plot of the running times of IRLS and SNAPHU on images of size $4000 \times M$ for different values of $M$. Left: simulated images. Right: goldstein filtered real images (log scale).}
%\label{fig:scaling}
%\end{figure}

\section{Conclusion}\label{sec13}
In this work we studied the 2D phase unwrapping $L^1$-norm minimization problem. We proposed an iteratively reweighted least squares algorithm which, combined with an efficient conjugate gradient method, allows to solve the problem using only simple linear algebra operations. This led to a GPU-compatible algorithm, which we implemented and for which we showed competitive performance compared to commonly-used phase unwrapping methods. In the future, we hope to explore how the IRLS approach can be used with tiling strategies, to further accelerate the unwrapping on very large images~\cite{chen2002phase}.

\section{Acknowledgements}

The authors are grateful to Justin Carpentier and Alessandro Rudi for fruitful discussions.

Benjamin Dubois-Taine acknowledges support from the European Research Council (grant SEQUOIA 724063).
This work was funded in part by the french government under management of Agence Nationale
de la recherche as part of the “Investissements d’avenir” program, reference ANR-19-P3IA-0001
(PRAIRIE 3IA Institute) and a Google focused award. Roland Akiki acknowledges support from ANRT (grant N$^{\circ}$ 2019/2003).

The authors are grateful to the CLEPS infrastructure from the Inria of Paris for providing resources and support.

\backmatter

\begin{appendices}

\section{Matrix visualization}\label{sec:matrix-definition}

\begin{align*}
        \I_M \kron \SN &= \begin{pmatrix}
            -1 &\ 1 &\  &\ &\\\
            0 &\ -1 &\ 1 &\ &\ \\
            &\ 0 &\ -1  & \ddots&\ \\
            &\ &\ \ddots & \ddots &\ 1 &\ \\
            &\ &\ &\ 0 & -1 &\ 1
        \end{pmatrix} \in \R^{M(N-1) \times MN},\\
       \I_M \kron \SN^\top \SN &=  \begin{pmatrix}
            \SN^\top \SN &\ &\ \\
            &\ \ddots &\ \\
            &\ &\ \SN^\top \SN
        \end{pmatrix}\in \R^{MN \times MN},
\intertext{with}
         \SN^\top \SN &= \begin{pmatrix}
            1 &\ -1 &\  &\ &\\\
            -1 &\ 2 &\ -1 &\ &\ \\
            &\ -1 &\ 2  & \ddots&\ \\
            &\ &\ \ddots & \ddots &\ -1 &\ \\
            &\ &\ &\ -1 & 2 &\ -1 \\
            &\ &\ &\ &\ -1 & 1
        \end{pmatrix} \in \R^{N \times N},
\intertext{and}
        \TM\TM^\top \kron \I_N &= \begin{pmatrix}
            \I_N &\ -\I_N &\  &\ &\\\
            -\I_N &\ 2\I_N &\ -\I_N &\ &\ \\
            &\ -\I_N &\ 2\I_N  & \ddots&\ \\
            &\ &\ \ddots & \ddots &\ -\I_N &\ \\
            &\ &\ &\ -\I_N & 2\I_N &\ -\I_N \\
            &\ &\ &\ &\ -\I_N & \I_N
        \end{pmatrix} \in \R^{MN \times MN},\\
        \TM \kron \I_N &= \begin{pmatrix}
            -\I_N &\   &\ &\\\
            \I_N &\ -\I_N  &\ &\ \\
            &\ \I_N &\ -\I_N  &\ \\
            &\ &\ \ddots & \ddots &\ &\ \\
            &\ &\ &\ \I_N & -\I_N\\
            &\ &\ &\ &\ \I_N
        \end{pmatrix} \in \R^{MN \times (M-1)N}
    \end{align*}

\section{Proof of Proposition~\ref{prop:equivalence-precond}}
\label{sec:proof-equivalence-precond}
\equivalencePrecond*
    \begin{proof}
    We write plain letters for the iterates of CG to solve~\eqref{eq:linear-system-PCG} with preconditioner $\Dmatrix$, and use tilded letters for the iterates of $CG$ without a preoconditioner to solve~\eqref{eq:equivalent-linear-system-PCG} starting from $\tilde{x}_0 = \Cmatrix x_0$. We claim that $\tilde{x}_l = \Cmatrix^*x_l$ for all $l \in \mathbb{N}$.

    We have
    \begin{align*}
        r_0 &= b - \Amatrix x_0,\\
        \tilde{r}_0 &= \Cmatrix^*b - \Cmatrix^* A \Cmatrix^* \tilde{x}_0 = \Cmatrix^*(b - \Amatrix \Cmatrix^* \Cmatrix x_0) = \Cmatrix^* (b - \Amatrix x_0) = \Cmatrix^* r_0
    \end{align*}
    where the second to last inequality comes from the fact that $\Amatrix\Cmatrix^* \Cmatrix = \Amatrix$, which holds because $\Cmatrix^* \Cmatrix = \sum_{i=2}^n d_i d_i^\top = \I_n - d_1 d_1^\top$ and the fact that $\Amatrix d_1 = 0$.

     Since by construction $b$ is in the range of $\Amatrix$, we get that $r_0$ is in the range of $\Amatrix$ (this is because in our case, $\Amatrix$ and $b$ come from the minimization of a least-squares problem). Since $q_1$ is the nullspace of $\Amatrix$, we get that $r_0^\top q_1 = r_0^\top d_1 = 0$ and the system $\Dmatrix z_0 = r_0$ is well-defined. One solution, which is the one we choose in practice, is to set
    \begin{align*}
        z_0 = \left(\sum_{i=2}^n \frac{1}{\gamma_i} d_i d_i^\top \right) r_0 = \Cmatrix^* \Cmatrix^* r_0.
    \end{align*}
    Moreover,
    \begin{align*}
        \tilde{z}_0 = \tilde{r}_0 = \Cmatrix^* r_0 = \Cmatrix z_0
    \end{align*}
    since $\Cmatrix z_0 = \Cmatrix \Cmatrix^* \Cmatrix^* r_0$ and, as above, $\Cmatrix\Cmatrix^* \Cmatrix^* = \Cmatrix^*$, which holds because $\Cmatrix\Cmatrix^* \Cmatrix^* = ( \I_n - d_1 d_1^\top) \Cmatrix^*$ and the fact that $d_1^\top \Cmatrix^* = 0$.
    Then,
    \begin{align*}
        p_0 &= z_0\\
        \tilde{p}_0 &= \tilde{z}_0 = \Cmatrix^* r_0 =  \Cmatrix z_0=\Cmatrix p_0\\
        \rho_0 & = z_0^\top r_0 \\
        \tilde{\rho}_0 &= \tilde{z}_0^\top \tilde{r}_0 = z_0 \Cmatrix \Cmatrix^* r_0 = z_0^\top r_0 = \rho_0,
        \end{align*}
        since $r_0$ is in the range of $\Amatrix$, and thus of $\Cmatrix\Cmatrix^*$, and thus that $\Cmatrix\Cmatrix^*r_0 = r_0$. Then 
        \begin{align*}
        \alpha_0 &= \frac{\rho_0}{p_0^\top \Amatrix^\top p_0}\\       
        \tilde{\alpha}_0 &= \frac{\tilde{\rho}_0 }{ \tilde{p}_0^\top \Cmatrix^* \Amatrix \Cmatrix^* \tilde{p}_0} = \frac{\rho_0}{p_0^\top \Cmatrix\Cmatrix^* \Amatrix \Cmatrix\Cmatrix^* p_0}.
    \end{align*}
    As above we have that $p_0^\top \Cmatrix\Cmatrix^* \Amatrix \Cmatrix\Cmatrix^* p_0 = p_0^\top \Amatrix p_0$ so that $\tilde{\alpha}_0 = \alpha_0$. Then,
    \begin{align*}
        x_1 &= x_0 + \alpha_0 p_0,\\
        \tilde{x}_1 &= \tilde{x}_0 + \alpha_0 \tilde{p}_0= \Cmatrix x_0 + \alpha_0 \Cmatrix p_0 = \Cmatrix x_1.
    \end{align*}
    Then
    \begin{align*}
        r_1 &= r_0 - \alpha_0 \Amatrix p_0\\
        \tilde{r}_1 &= \tilde{r}_0 - \alpha_0 \Cmatrix^* \Amatrix \Cmatrix^* \tilde{p}_0 = \Cmatrix^* (r_0 - \alpha_0 \Amatrix \Cmatrix^* \Cmatrix p_0) = \Cmatrix^* r_1.
    \end{align*}
    In particular, we get that $r_1$ is in the range of $\Amatrix$, and thus of $\Dmatrix$. Therefore the system $\Dmatrix z_1 = r_1$ is well-defined and we choose in practice to set
    \begin{align*}
        z_1 = \left( \sum_{i=2}^n \frac{1}{\gamma_i} d_i d_i^\top\right) r_1 = \Cmatrix^* \Cmatrix^* r_1.
    \end{align*}
    Moreover, $\tilde{z}_1 = \tilde{r}_1 = \Cmatrix^*r_1 = \Cmatrix z_1$ by the same argument as above. Then
    \begin{align*}
        \rho_{1} &= r_{1}^\top z_1 \\      \tilde{\rho}_{1} &= \tilde{r}_1^\top \tilde{z}_1 = r_1^\top \Cmatrix^* \Cmatrix z_1 = r_1^\top z_1 = \rho_1,
    \end{align*}
    since again $\Cmatrix^* \Cmatrix = \I_n - d_1 d_1^\top$ ad $r_1$ is in the range of $\Amatrix$ and thus $r_1^\top d_1 = 0$.
    Thus $\beta = \tilde{\beta}$ and 
    \begin{align*}
        p_1 &= z_1 + \beta p_0 \\
        \tilde{p}_1 &= \tilde{z}_1 + \beta \tilde{p}_0 = \Cmatrix p_1
    \end{align*}
 One can then apply the same reasoning at the next loop $l=1$ and this shows that the iterates always satisfy $\tilde{x}_l = \Cmatrix x_l$. 
    \end{proof}

\section{Additional experiments}
\subsection{Simulated images}

\label{sec:additional-experiments-simulated}

\foreach \location/\titletext in {arz_lebanon/Arz Lebanon , etna/Etna , elcapitan/El Capitan , kilimanjaro/Kilimanjaro , mount_sinai/Mount Sinai , korab_northmacedonia/Korab , nevada_usa/Nevada , zeil_australia/Zeil (Australia) , wulonggou_china/Wulonggou (China) }{
    \begin{figure}[H]
    \centering
    \includegraphics[width=0.33\linewidth]{figures/plots/\location/noiseless_v1_wrapped_signal.pdf}\hfil
    \includegraphics[width=0.33\linewidth]{figures/plots/\location/noiseless_v1_IRLS_output.pdf}\hfil
    \includegraphics[width=0.33\linewidth]{figures/plots/\location/noiseless_v1_snaphu_error.pdf}\par\medskip
    \includegraphics[width=0.33\linewidth]{figures/plots/\location/noiseless_v1_original_signal.pdf}\hfil
    \includegraphics[width=0.33\linewidth]{figures/plots/\location/noiseless_v1_IRLS_error.pdf}\hfil
    \includegraphics[width=0.33\linewidth]{figures/plots/\location/noiseless_v1_rounded_IRLS_error.pdf}
    \caption{Comparison of the output of SNAPHU (MST + SC) and IRLS on \titletext simulated image. Image size is $2048 \times 2048$.}
    %\label{fig:noiseless-warjan-afghan}
\end{figure}
}

\subsection{Real images with Goldstein filtering}
\label{sec:additional-experiments-real}

\foreach \location/\titletext in {arz_lebanon/Arz Lebanon , etna/Etna , elcapitan/El Capitan , kilimanjaro/Kilimanjaro , mount_sinai/Mount Sinai , korab_northmacedonia/Korab , nevada_usa/Nevada , zeil_australia/Zeil (Australia) , wulonggou_china/Wulonggou (China) }{
    \begin{figure}[H]
    \centering
    \includegraphics[width=0.25\linewidth]{figures/plots/\location/real_goldstein_v2_real_goldstein_signal.pdf}\hfil
    \includegraphics[width=0.25\linewidth]{figures/plots/\location/real_goldstein_v2_irls_output.pdf}\hfil
    \includegraphics[width=0.25\linewidth]{figures/plots/\location/real_goldstein_v2_snaphu_mst.pdf}\hfil
    \includegraphics[width=0.25\linewidth]{figures/plots/\location/real_goldstein_v2_snaphu_mcf.pdf}
    \par\medskip
    \includegraphics[width=0.25\linewidth]{figures/plots/\location/real_goldstein_v2_snaphu_mstL1.pdf}\hfil
    \includegraphics[width=0.25\linewidth]{figures/plots/\location/real_goldstein_v2_snaphu_mststat_cost.pdf}\hfil
    \includegraphics[width=0.25\linewidth]{figures/plots/\location/real_goldstein_v2_snaphu_mcfL1.pdf}\hfil
    \includegraphics[width=0.25\linewidth]{figures/plots/\location/real_goldstein_v2_snaphu_mcfstat_cost.pdf}
    \caption{Comparison of the output of IRLS and of the different SNAPHU methods on \titletext real image after Goldstein filtering. Image size is $2048 \times 2048$.}
    %\label{fig:noiseless-warjan-afghan}
\end{figure}
}

\end{appendices}

%%===========================================================================================%%
%% If you are submitting to one of the Nature Portfolio journals, using the eJP submission   %%
%% system, please include the references within the manuscript file itself. You may do this  %%
%% by copying the reference list from your .bbl file, paste it into the main manuscript .tex %%
%% file, and delete the associated \verb+\bibliography+ commands.                            %%
%%===========================================================================================%%

\bibliography{sn-bibliography}% common bib file
%% if required, the content of .bbl file can be included here once bbl is generated
%%\input sn-article.bbl

\end{document}